\newtheorem{thmspec}{\relax}
\newtheorem{theorem}{Theorem}[section]
\newtheorem{thm}[theorem]{Theorem}
\newtheorem{lem}[theorem]{Lemma}
\newtheorem{cor}[theorem]{Corollary}
\newtheorem{prop}[theorem]{Proposition}
\newtheorem{defi}[theorem]{Definition}
\theoremstyle{definition}
\theoremstyle{remark}
\numberwithin{equation}{section}
\def \Bbb{\mathbb}
\def\onto{{\kern3pt\to\kern-8pt\to\kern3pt}}
\def\<{\langle}
\def\>{\rangle}
\def\|{{\ |\ }}
\def\onto{\twoheadrightarrow}
\def\-{\underline}
\def\N{\Bbb N}
\def\R{\Bbb R}
\def\C{\Bbb C}
\def\P{\Bbb P}
\def\B{\Bbb B}
\def\codim{\operatorname{codim}}
\def\Id{\operatorname{Id}}
\def\d{\operatorname{d}}
\def\ddc{\operatorname{dd^c}}
\def\I{\mathcal{I}}
\def\<{\langle}
\def\>{\rangle}
\def\serieslogo@{\relax}
\def\@setcopyright{\relax}
\title[Green currents]
{Green currents  for quasi-algebraically stable   meromorphic self-maps of $\P^k$}
\begin{document}

\author{Vi{\^e}t-Anh  Nguy\^en}
 
 \address{%{\sc Current  address:}
Math{\'e}matique-B{\^a}timent 425\\
UMR 8628, 15 rue Georges Cl\'emenceau\\
Universit{\'e} Paris-Sud\\
F-91405 Orsay, France}
\email{VietAnh.Nguyen@math.u-psud.fr}
\urladdr{http://www.math.u-psud.fr/$\sim$vietanh/}

\subjclass[2010]{Primary 37F, Secondary  32U40, 32H50}
\date{}

\keywords{Quasi-algebraically stable  meromorphic map,
 algebraic degree,  first dynamical degree, Green current.}
%Compact  K\"{a}hler manifold,

\begin{abstract}
We construct a canonical  Green current $T_f$ for every  quasi-algebraically stable  meromorphic self-map $f$ of $\P^k$
such that its   first dynamical degree  $\lambda_1(f)$  is a  simple root of its characteristic   polynomial     and
that  $\lambda_1(f)>1.$
We establish  a functional  equation for $T_f$ and  show that the support of  $T_f$ is  contained in the Julia set, which is  thus  non empty.
 \end{abstract}
\maketitle

\section{ Introduction}

    Let $f: \P^k\longrightarrow \P^k$ be a meromorphic self-map.   
    Then  there are    homogeneous polynomials $G_0,\ldots,G_k$ in the  variables $z_0,\ldots,z_k$ of the same degree $d$
    with no nontrivial  common factor  such that $$f= \lbrack G_0:\ldots :G_k\rbrack$$ in homogeneous coordinates.
    The polynomial map $F:=(G_0,\ldots,G_k)$ is  said to be  {\it a lifting} of   $f$ in $\C^{k+1}.$ The number $\d(f):=d$ is called  {\it the
    algebraic degree} of  $f.$ Moreover, $f$ is said to
    be {\it dominant} if   its Jacobian
    determinant  does not vanish identically (in  any local chart).
    In this  work we always consider   dominant meromorphic self-maps $f$ of $\P^k$ with $k\geq 2.$
   For $n\in\N,$  $f^n$  denotes  $f\circ\cdots\circ f$ ($n$ times).  The  Fatou  set of $f$  is  the largest  open subset of $\P^k$ on which 
   $(f^n)_{n=1}^{\infty}$  forms  a normal family. The Julia  set of $f$ is, by definition, the complement of its Fatou set  in $\P^k.$

Recall    the following definition (see \cite{fs,fs2,si})
\renewcommand{\thethmspec}{Definition 1}\begin{thmspec}
A  meromorphic self-map $f:\P^k\longrightarrow \P^k$   is said to be {\rm algebraically stable} (or  {\rm AS} for short) if
$\d(f^n)=\d(f)^n,$ $n\in\N.$
\end{thmspec}
In other words, $f$ is AS if and only if  a sequence  $(F_n)_{n=1}^{\infty}$  of liftings  of $(f^n)_{n=1}^{\infty}$
can be defined as follows
\begin{equation} \label{eq_AS}
F_n:=F_1\circ F_{n-1},\qquad n\geq 1,
\end{equation}
where $F_1,\ F_0$ are arbitrarily fixed  liftings of $f,\ f^0:=\Id$ respectively.

 For every AS  map $f$ with $\d(f) > 1,$ N. Sibony proves  in \cite{si} that
 the following limit in the sense of current
\begin{equation}\label{eq_Sibony}
T:=\lim_{n\to\infty} \frac{(f^n)^{*} \omega}{\d(f^n)}
\end{equation}
exists,
where  $\omega$ denotes the Fubiny-Study K\"{a}hler form on $\P^k$ so normalized that
 $\int\limits_{\P^k}\omega^k=1.$   $T$ is called {\it the Green current} associated to $f.$
 He also proves that  $T$ does not charge  any hypersurfaces.
Given a positive integer $d,$     a   ``generic"  meromorphic self-map  of  algebraic  degree $d$  is always  AS (see \cite{fs2}).
In the last  decades the study of  Green currents
 plays a central role in Complex Dynamics in higher dimensions.
  We address the reader to the survey articles of
  Forn{\ae}ss-Sibony  \cite{fs,fs2},  N. Sibony \cite{si}, Dinh-Sibony \cite{ds5}
   for    further explanations. Some other   articles on the topic  are       \cite{ds1,ds3, dds,gu} etc.

 In contrast to the case of  AS maps,  the dynamics  of  non  AS maps  are, at  the moment, still very poorly understood  although 
 there has been
a lot of activity around this topic in the past few years.
 Two fundamental   problems arise:

\smallskip

\noindent{\bf Problem 1.}  Study the degree-growth of non AS maps.

\noindent{\bf Problem 2.}
   Define  a  natural  Green currents  for  such  maps.

 \smallskip

  One of the first works
 in this direction is the article of Bonifant--Forn{\ae}ss \cite{bf}   where    some special non AS
maps are thoroughly studied.  In her thesis   \cite{bo} A.M. Bonifant constructs an appropriate Green current for these maps
and then writes down the functional equation.
 J. Diller and Ch. Favre (see  \cite{df}) have constructed Green currents for   birational maps of compact K\"{a}hler surface.
In the case of polynomial maps of  $\C^2,$ the two problems above have  been solved  by   Ch. Favre and  M. Jonsson (see \cite{fj,fj2}). Moreover, S. Boucksom,  Ch. Favre and  M. Jonsson   have investigated the
 degree-growth for meromorphic surface maps   (see \cite{bfj}).
In higher dimension  the situation is completely open, and the two questions above remain a great mystery.
B. Hasselblatt  and J.  Propp  have  studied  the degree-growth of monomial maps  in \cite{hp}.
There are series of interesting examples of birational maps acting on the space of complex square matrices which were worked out  by E. Bedford and K. Kim and others (see \cite{bk1,bk2} etc). 
However,
a general theoretical approach  still  does not exist at the moment.

\smallskip

In this paper we  study  a new class of non AS self-maps of $\P^k.$  Under  a  mild extra assumption on such maps  we construct the good Green currents  for them,
write  down the functional equations  and  show that the support of  the Green current  is  contained in the Julia set of the corresponding self-map.
Here  is the formal definition of the new   class.
\renewcommand{\thethmspec}{Definition 2}\begin{thmspec}
A  meromorphic self-map $f$ of $\P^k$   is said to be {\rm quasi-algebraically stable} (or {\rm  QAS} for short) if
a sequence of liftings $(F_n)_{n=1}^{\infty}$  for the iterates $(f^n)_{n=1}^{\infty}$ can be defined as follows
  \begin{equation*}
F_n:=
\begin{cases}
F_1\circ F_{n-1}, %&\qquad
  &n=1,\ldots,n_0  ,\\
 \frac{F_1\circ F_{n-1}}{H\circ F_{n-n_0-1}} , %&\qquad
&  n>n_0.
\end{cases}
\end{equation*}
 Here $n_0\geq 1$ is an integer, $H$ is a   homogeneous polynomial of $k+1$ variables,
and $F_0:=\Id$  and $F_1$  is an  arbitrarily fixed  lifting of  $f.$
\end{thmspec}

It is  worthy  comparing the above  recurrent formula  with (\ref{eq_AS}).
In the previous  work \cite{nv} the author  has introduced a criterion   in order to test  if a  non AS  self-map  is QAS  (see  Condition (i)--(iii) in Theorem  \ref{thm_in_nv} below).
In fact, in  \cite{nv} all self-maps which satisfy the latter criterion  were called  QAS. But in the present work  we   choose
Definition 2 as  the new definition  for QAS maps. Although this will enlarge  the class of QAS self-maps  we  emphasize that our main objective  is
 to construct canonical Green currents and to study their properties.
As it was  shown in  \cite{nv}   there  are a lot of non AS self-maps which are QAS. In the present  work
  a new family  of QAS  self-maps in $\P^2$ is  exhibited.

\smallskip

This paper is organized as follows.

\smallskip

We begin Section 2 by
collecting some background
and introducing some notation, in order to prepare for the statement of  the main theorem.

Section 3 is devoted to  the proof of  the main theorem.

 Finally, Section 4 concludes the paper with  a new family of  QAS self-maps in $\P^2.$

\medskip

\indent{\it{\bf Acknowledgment.}}
 The paper was written while  the  author was visiting  the Universit\'e  Pierre et Marie Curie and the Korea Institute  for Advanced Study (KIAS). He wishes to express his gratitude to these organizations. The  first version  of this article   circulated in a form of a preprint in   2002.

\medskip
\section{Statement of the main result} \label{section_main_results}
First  we fix  some notation and  terminology.
\subsection{Meromorphic self-maps and positive closed currents of bidegree $(1,1)$}\label{subsection_meromorphic_maps_currents_potientials}
 Let  $f$ be  a    meromorphic self-map of $\P^k.$  The   {\it indeterminacy locus} $\I(f)$ of $f$  is the set of all points of $\P^k$
    where $f$ is not  continuous, in other words, the common zero set
    of  component polynomials $G_0,\ldots,G_k,$  where  $(G_0,\ldots,G_k)$ is a lifting of $f.$  So $\I(f)$ is a subvariety of codimension at least $2.$
     The {\it first dynamical degree} of $f,$ denoted by $\lambda_1(f),$ is given by
    \begin{equation}\label{eq1.1}
    \lambda_1(f):=\lim\limits_{n\to\infty} \d(f^n)^{\frac{1}{n}}.
    \end{equation}
For a   discussion  on dynamical degrees of meromorphic self-maps see the articles of Dinh--Sibony \cite{ds2,ds4}.
 We denote by $\mathcal{C}^{+}(\P^k)$ the set of
 positive closed currents  of bidegree  $(1,1)$ on $\P^k.$ The mass of $T$  is defined by
  $\Vert T\Vert:= \int\limits_{\P^k} T\wedge \omega^{k-1}.$
 We consider the cone  $\mathcal{P}$ of plurisubharmonic  functions  $u$ in $\C^{k+1},$ satisfying the following homogeneity property:
there exist $c>0$ such that if $\lambda\in \C,$ then
\begin{equation*}
u(\lambda z)=c\log\vert \lambda\vert +u(z), \qquad  z\in \C^{k+1}.
\end{equation*}
The  functions in $\mathcal{P}$ are so normalized   that  $\sup\limits_{\B} u=0,$  where  $\B$  denotes the unit ball
in $\C^{k+1}.$

With a function $u$ satisfying  the above homogeneity property (but not necessarily the  normalization condition),  we are going to associate a current $T\in \mathcal{C}^{+}(\P^k).$
Let $\pi:\  \C^{k+1}\setminus \{0\}\rightarrow\P^k$ be the canonical projection. Let $U$ be an open set in $\P^k$ such that there is a
holomorphic inverse $s:\ U\rightarrow \C^{k+1}\setminus \{0\}$ of $\pi,$ that is, $\pi\circ s=\Id.$ Define $T$ on $U$ by  $T:=\ddc (u\circ s).$
Then $T$ is  independent of $s.$  With   this  local definition, we have  an operator  $\mathcal{L}$ defined by
$\mathcal{L}(u):=T.$  It is  well-known (see \cite{fs,si})  that  $\mathcal{L}$ is  an isomorphism between $\mathcal{P}$ and  $\mathcal{C}^{+}(\P^k).$ 
If $T=\mathcal{L}(u),$   then  we  say that  a function of the form  $u+c$ is  a potential  of $T,$  where $c\in\R$ is  a constant.  Given a potential  $u$  of   a current $T\in \mathcal{C}^{+}(\P^k),$   we have that
 $ u(\lambda z)=\Vert T\Vert\cdot \log\vert \lambda\vert +u(z)$ for $z\in \C^{k+1},\ \lambda\in\C.$ 

 Any (not necessarily reduced) hypersurface $\mathcal{H}$  of $\P^k$ defines a current of integration  $[\mathcal{H}]=[H=0]\in  \mathcal{C}^{+}_1(\P^k),$
 where $H:\ \C^{k+1}\longrightarrow\C$ is a homogeneous polynomial defining  $\mathcal{H}.$ Moreover,
 \begin{equation}\label{eq2.0}
 \Vert [H=0] \Vert =\deg(H),
\end{equation}
where $\deg(H)$ is the (homogeneous) degree of  $H.$

 For a current $T\in  \mathcal{C}^{+}(\P^k) ,$  we define  the pull-back of $T$ by $f$  as  follows
\begin{equation*}
f^{\ast}T:= \mathcal{L}(u\circ F),
\end{equation*}
where $u:=\mathcal{L}^{-1}(T)$ and  $F$ is  a lifting of $f.$
It is  easy to see that
 \begin{equation}\label{eq2.1}
 \Vert f^{\ast} T\Vert=\d(f)\cdot\Vert T\Vert.
 \end{equation}

%We define the {\it Lelong number}  $\nu(T,a)$  of $T\in \mathcal{C}^{+}(\P^k) $ at a point $a\in\P^k$ as follows. Fix
% any local coordinates sending $a$ to the origin in $\C^k.$  Choose a plurisubharmonic function $u$  defined around $0$ in these coordinates
%such that $T=\ddc u.$ Then the {\it Lelong number} of $u$ at $0$  is  given by
% \begin{equation*}
% \nu(u,0):=\max\left\lbrace c\geq 0:\  u(z)\leq c\log\vert z\vert+\mathcal{O}(1)\right\rbrace
% \end{equation*}
% which is a finite nonnegative real number. We then set  $\nu(T,a):=
% \nu(u,0),$ which does not depend on any choice we made.

 Finally, for a function $u:\ \C^{k+1}\rightarrow [-\infty,\infty],$  let $u^{\ast}$ denote its {\it upper semicontinuous regularization}, that is,
$u^\ast(z):=\limsup_{w\to z}u(w),$ $z\in \C^{k+1}.$

\subsection{Statement of the main result}\label{subsection_main_result}
Let $f$ be  a QAS self-map   as in Definition 2.  To simplify the notation, from now on we will write $\lambda$  for $\lambda_1(f)$ and $h$  for $\deg(H)$.
As an immediate consequence of Definition 2, we have that
\begin{equation}\label{eq1_section_preparatory_results}
\d(f^{n})=
 \begin{cases}
\d(f)^n, %&\qquad
  &n=0,\ldots,n_0  ,\\
 \d(f)\cdot\d(f^{n-1})- h\cdot \d(f^{n-n_0-1}), %&\qquad
&  n>n_0.
\end{cases}
\end{equation}
On the other hand, it has been shown in \cite[Theorem 4.2]{nv} that
$\lambda$ is the root of maximal modulus of the so-called {\it characteristic polynomial} of $f$
\begin{equation}\label{eq2_section_preparatory_results}
P(t)=t^{n_0+1}-dt^{n_0}+h.
\end{equation}

Now we are ready to formulate the following
\renewcommand{\thethmspec}{Main Theorem}\begin{thmspec}
    \label{mainthm}
   Under the above  hypothesis and notation, 
   suppose in addition that $\lambda$ is a  simple root of  $P(t)$ and  $\lambda>1.$
   \begin{itemize}
   \item[(i)]
  Then $\big(\limsup\limits_{n\to\infty}\frac{\log{\Vert F_n\Vert}}{\d(f^n)}\big)^{\ast}$ exists and defines
  a plurisubharmonic function $u$ in $\C^{k+1}.$
   \item[(ii)] Let  $T:=\mathcal{L}(u).$ Then the following functional equation holds
    \begin{equation*}%\label{eq1_Theorem_B}
    f^{\ast}(T)=\lambda\cdot T+\frac{d-\lambda}{h}\cdot [H=0].
    \end{equation*}
    \item[(iii)] The support of the current $T$ is  contained in the Julia set of $f,$ which is  thus  non empty.
\end{itemize}
 \end{thmspec}

It is  worthy to remark here that the presence of a factor $[H=0]$  in the functional equation (ii) characterizes QAS self-maps. Indeed,
for  an AS map $f$ with corresponding Green current $T$  (see (\ref{eq_Sibony})),  the functional equation is  $f^{\ast}T=\d(f)\cdot T.$

We  know  from \cite[Theorem 4.2]{nv} that the multiplicity of $\lambda$ is  either $1$ or $2.$
It is  equal to $2$  only if     $h=\left(\frac{d}{n_0+1}\right)^{n_0+1}n_0^{n_0}.$ Moreover, it is  clear by (\ref{eq2_section_preparatory_results}) that
$\lambda>1$ if and only if $h>1.$
%and in this case $\lambda_1(f)=
%\frac{dn_0}{n_0+1}.$   

The  author does not know if the QAS self-maps  considered  in this work can be
made algebraically stable by blowing up the projective space, in which case the existence of
invariant currents is already known in general.  
However, the author thinks that  the idea of QAS self-maps and  the technique presented here  could
 be  extended  to  a more general context, for example,  meromorphic self-maps on compact
K\"{a}hler  manifolds etc.

\section{Proof}  \label{section_proof}
In this  section we keep the hypothesis and  notation introduced  just before  the Main Theorem
in Subsection  \ref{subsection_main_result}. 
 
%%%%%%%%%%%%%%%%%%%%%%%%%%%%%%%%%%%%%%%%
%%%%%%%%%%%%%%%%%%%%%%%%%%%%%%%%%%%%%%%%
\begin{lem}\label{lem_degree}
Let $r$ be  the  multiplicity of $\lambda$  in $P(t).$\\
1) There exist  $Q\in \R[t]$ with $\deg Q=r-1$ and  $0<\rho<1$  such that
\begin{equation*}
\d(f^n)=\lambda^n\big(Q(n) +\mathcal{O}(\rho^n)\big).
\end{equation*}
2) There  exists  a finite positive  constant $C$  such that for all $n\in\N,$
\begin{equation*}
\frac{\d(f^{n+1})-\lambda \d(f^n)}{\d(f^n)}\leq  \frac{C}{n}\qquad\text{and}\qquad
\sum\limits_{j=0}^n \d(f^j)\leq  C\d(f^n).
\end{equation*}
\end{lem}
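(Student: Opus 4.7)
My plan is to treat the defining recurrence $\d(f^n)=d\cdot\d(f^{n-1})-h\cdot\d(f^{n-n_0-1})$, valid for $n>n_0$ by (\ref{eq1_section_preparatory_results}), as a linear recurrence whose characteristic polynomial is precisely the $P(t)$ of (\ref{eq2_section_preparatory_results}). The classical theory then yields a closed form
\begin{equation*}
\d(f^n)=\sum_i p_i(n)\mu_i^n,
\end{equation*}
with $\mu_i$ ranging over the distinct roots of $P$, each $p_i\in\C[t]$ of degree less than the multiplicity of $\mu_i$, and coefficients uniquely determined by the $n_0+1$ initial conditions $\d(f^n)=d^n$, $n=0,\ldots,n_0$. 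I would isolate the $\lambda$-contribution as $\lambda^n Q(n)$ and use that, by Theorem~4.2 of \cite{nv}, every other $\mu_i$ satisfies $|\mu_i|<\lambda$: setting $\rho_0:=\max_{\mu_i\neq\lambda}|\mu_i|/\lambda<1$ and absorbing the polynomial prefactors $p_i(n)$ into any fixed $\rho\in(\rho_0,1)$ turns the remainder into $\mathcal{O}(\lambda^n\rho^n)$, giving the asymptotic $\d(f^n)=\lambda^n(Q(n)+\mathcal{O}(\rho^n))$ of part~1). That $\deg Q$ is exactly $r-1$, with a \emph{strictly positive} leading coefficient, is forced by positivity of $\d(f^n)$ together with the exponential smallness of the remainder: a vanishing top coefficient or a negative one would eventually produce $\d(f^n)\le 0$.

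For part~2) I would substitute the expansion of part~1) into both inequalities. For the first,
\begin{equation*}
\d(f^{n+1})-\lambda\d(f^n)=\lambda^{n+1}\bigl(Q(n+1)-Q(n)\bigr)+\mathcal{O}(\lambda^{n+1}\rho^n),
\end{equation*}
and the bracketed polynomial has degree $r-2$ when $r\ge 2$ and vanishes when $r=1$; dividing by $\d(f^n)\sim c\lambda^n n^{r-1}$ therefore produces a bound of order $1/n$ in either case. For the summation inequality, the dominant contribution is $\sum_{j=0}^n\lambda^j Q(j)$, which I control by the geometric-tail estimate $\sum_{j=0}^n\lambda^j j^{r-1}\le\lambda^n n^{r-1}\cdot\lambda/(\lambda-1)$; combined with the matching lower bound $\d(f^n)\ge c'\lambda^n n^{r-1}$ from the positivity argument above, this delivers the required constant $C$ (using crucially $\lambda>1$).

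The only genuinely delicate point I foresee is verifying that the leading coefficient of $Q$ is strictly positive, since both the identification $\deg Q=r-1$ in part~1) and the lower bound on $\d(f^n)$ used in part~2) rely on it. As indicated, this is an immediate consequence of the strict positivity of the degree sequence together with the exponential decay of the remainder $R_n:=\sum_{\mu_i\neq\lambda}p_i(n)\mu_i^n$, so in practice it is not a serious obstruction. Once this is in place, the remaining work is purely arithmetic bookkeeping using the recurrence; no complex-analytic input beyond what is already contained in \cite{nv} is required.
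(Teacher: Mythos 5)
Your proof follows essentially the same strategy as the paper's: both solve the linear recurrence $\d(f^n) = d\,\d(f^{n-1}) - h\,\d(f^{n-n_0-1})$ in closed form, appeal to Theorem~4.2 of \cite{nv} (and its proof) for the dominance of $\lambda$ over the other roots, isolate the $\lambda^n Q(n)$ term, and push the rest into $\mathcal{O}(\lambda^n\rho^n)$; Part~2) is then deduced from Part~1) by elementary bookkeeping, which you carry out explicitly where the paper simply declares it an immediate consequence. The paper organizes this through an explicit case split on the root configuration (Cases~1, 2a, 2b, inherited from the proof in \cite{nv}), whereas you phrase it uniformly via the general solution of the recurrence; substantively this is the same argument.

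One point in your write-up is not quite tight: you assert that positivity of $\d(f^n)$ \emph{forces} $\deg Q = r-1$, on the grounds that a vanishing top coefficient ``would eventually produce $\d(f^n)\le 0$.'' That inference is wrong --- a zero leading coefficient merely lowers $\deg Q$ and is perfectly compatible with $\d(f^n)>0$; positivity rules out a \emph{negative} leading coefficient, not a vanishing one. This only matters when $r=2$, and the paper's own proof (Case~2b, where it sets $Q(t)=c_{n_0}t+c_{n_0+1}$ without checking $c_{n_0}\neq 0$) leaves the identical point implicit, so you are in good company; moreover it is moot for the Main Theorem, which assumes $\lambda$ simple so that $r=1$ and $Q$ is a positive constant, and the Part~2) estimates go through regardless of the exact degree of $Q$. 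Still, the phrase ``is forced by positivity'' claims more than the argument delivers.
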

\begin{proof} To prove Part 1) we proceed as in the proof of Theorem 4.2 in \cite{nv}.
Let $\lambda_1,\ldots,\lambda_{n_0+1}$ be all the roots of $P$  counted with multiplicities. There are two
cases to consider.

\smallskip

\noindent{\bf Case 1:} all roots of $P$ are distinct.

By the proof of Case  1 in Theorem 4.2 in \cite{nv}, there exist $c_j\in\C$ for $j=1,\ldots,n_0+1$ such that
\begin{equation}\label{eq1_lem_degree}
\d(f^n)=\sum\limits_{j=1}^{n_0+1}c_j\lambda^n_j,\qquad n\in\N.
\end{equation}
Moreover,  if 
\begin{equation*}
\vert
\lambda_1\vert=\max\limits_{c_j\not=0,\  j=1,\ldots, n_0+1}{\vert \lambda_j\vert}.
\end{equation*} 
then $\lambda_1>0$ and $c_1\in\R.$ This implies Part 1) for $\lambda:=\lambda_1$ and $Q(t):=c_1.$

\noindent{\bf Case 2:}  $P$ has a multiple root.

Recall from   Case 2  in the proof of Theorem 4.2 in \cite{nv} that
 $h=\left(\frac{d}{n_0+1}\right)^{n_0+1}n_0^{n_0}$ and that the only multiple root of $P$ is 
$\lambda_{n_0}:=\frac{dn_0}{n_0+1}$   which    is, in fact,  a double root.
We may assume  without loss of generality that $\lambda_{n_0+1}=\lambda_{n_0}.$
Moreover,  there exist $c_j\in\C$ for $j=1,\ldots,n_0-1,$ and $c_{n_0},c_{n_0+1}\in\R$  such that
\begin{equation}\label{eq2_lem_degree}
\d(f^n)=\sum\limits_{j=1}^{n_0-1} c_j\lambda^n_j+(nc_{n_0}+c_{n_0+1})\lambda_{n_0}^n,\qquad n\in\N.
\end{equation}
Let 
\begin{equation*} 
\mu:=\max\limits_{c_j\not=0,\  j=1,\ldots, n_0+1}{\vert \lambda_j\vert}.
\end{equation*} 
There are two subcases to consider.

\noindent{\bf  Case 2a:} $\mu\not=\lambda_{n_0}.$ 

Then $\mu >\lambda_{n_0}.$   We  argue as in Case 1 using  (\ref{eq2_lem_degree}) instead of (\ref{eq1_lem_degree}). Consequently, if $|\lambda_1|=\mu$ then we can show that $\lambda_1>0$ and $c_1\in\R.$
This  proves Part 1) for $\lambda:=\lambda_1$ and $Q(t):=c_1.$

\noindent{\bf  Case 2b:} $\mu=\lambda_{n_0}.$

 By  (\ref{eq2_lem_degree})  we  see that Part 1) holds for $\lambda:=\lambda_{n_0}$ and $Q(t):=c_{n_0}t+c_{n_0+1}.$
 This completes the proof of Part 1).

Part 2)  is an immediate consequence  of Part 1).
\end{proof}
%%%%%%%%%%%%%%%%%%%%%%%%%%%%%%%%%%%%%%%%
%%%%%%%%%%%%%%%%%%%%%%%%%%%%%%%%%%%%%%%%

In this  section we make the following convention: $\d(f^{n})=0$ for all $n< 0.$

\begin{lem}\label{prop_recurrence}
The following identity holds
\begin{equation}\label{eq1_prop_recurrence}
F_{n}=\begin{cases}
F_{n-1}\circ F, %&\qquad
  &n=1,\ldots,n_0  ,\\
  \frac{F_{n-1}\circ F}{H^{\d(f^{n-n_0-1})}},&  n>n_0.
\end{cases}
\end{equation}
Moreover for all currents  $T\in\mathcal{C}^{+}(\P^k),$
 \begin{equation}\label{eq2_prop_recurrence}
  (f^n)^{*}T=
  \begin{cases}
f^{\ast}((f^{n-1})^{*}T), %&\qquad
  &n=1,\ldots,n_0  ,\\
 f^{\ast}((f^{n-1})^{*}T)-\Vert T\Vert\cdot \d(f^{n-n_0-1})\cdot [H=0], %&\qquad
&  n>n_0.
\end{cases}
 \end{equation}
\end{lem}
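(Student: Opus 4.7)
The plan is to prove Part 1) by strong induction on $n$ and then derive Part 2) as a direct consequence of Part 1) together with the homogeneity property of potentials in $\mathcal{P}$. The base case of Part 1) at $n=1$ is trivial since $F_0 = \Id$, so $F_0 \circ F = F_1$. For $2 \leq n \leq n_0$, the inductive hypothesis together with Def 2 immediately gives $F_n = F_1^n$ (as the $n$-fold composition), and hence also $F_n = F_{n-1} \circ F$. For the transitional case $n = n_0+1$, both $(F_1 \circ F_{n_0})/(H \circ F_0)$ and $(F_{n_0} \circ F)/H^{\d(f^0)}$ collapse to $F_1^{n_0+1}/H$.

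The central step is the inductive case $n > n_0+1$. Here I would invoke the inductive hypothesis for $F_{n-1}$ to replace it by $(F_{n-2} \circ F)/H^{\d(f^{n-n_0-2})}$, substitute into $F_n = (F_1 \circ F_{n-1})/(H \circ F_{n-n_0-1})$ from Def 2, and exploit the key homogeneity identity $F_1(v/\phi(z)) = F_1(v)/\phi(z)^d$ (valid because $F_1$ is a degree-$d$ polynomial map) to commute $F_1$ past the scalar division. Then Def 2 applied to $F_{n-1}$ rewrites $F_1 \circ F_{n-2}$ as $F_{n-1} \cdot (H \circ F_{n-n_0-2})$. Finally, one analyzes $(H \circ F_{n-n_0-2}) \circ F_1 = H \circ (F_{n-n_0-2} \circ F_1)$: when $n \leq 2n_0+1$, the inductive hypothesis yields $F_{n-n_0-2} \circ F_1 = F_{n-n_0-1}$ directly; when $n > 2n_0+1$, the inductive hypothesis for $F_{n-n_0-1}$ contributes an extra scalar factor $H^{\d(f^{n-2n_0-2})}$ which, by the degree-$h$ homogeneity of $H$, pulls out as $H^{h \cdot \d(f^{n-2n_0-2})}$. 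After collecting powers, the claim reduces to the arithmetic identity $\d(f^{n-n_0-1}) = d\cdot \d(f^{n-n_0-2}) - h\cdot \d(f^{n-2n_0-2})$, which is exactly the recurrence (1.1).

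For Part 2), set $u := \mathcal{L}^{-1}(T) \in \mathcal{P}$, so that $u(\lambda z) = \Vert T\Vert \log|\lambda| + u(z)$ for $\lambda \in \C$. Using the QAS lifting $F_n$, the definition gives $(f^n)^*T = \mathcal{L}(u \circ F_n)$. For $n > n_0$, substituting Part 1) and applying the homogeneity of $u$ to the scalar factor $H(z)^{\d(f^{n-n_0-1})}$ in the denominator yields
\begin{equation*}
u \circ F_n = u \circ (F_{n-1} \circ F) - \Vert T \Vert \cdot \d(f^{n-n_0-1}) \cdot \log|H|.
\end{equation*}
Applying $\mathcal{L}$ and recognizing $\mathcal{L}(u \circ F_{n-1} \circ F) = f^*((f^{n-1})^*T)$ together with $\mathcal{L}(\log|H|) = [H=0]$ produces the announced functional relation. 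The case $n \leq n_0$ follows in the same way but without any correction term.

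The main obstacle I anticipate is the bookkeeping in Part 1) in the regime $n > 2n_0+1$: it is precisely there that one must carefully track how the $H$-factors produced by earlier applications of the QAS recursion propagate through composition with $F_1$, using the degree-$h$ homogeneity of $H$, and match after cancellation against the nonlinear part of the degree recursion (1.1). Once this delicate cancellation is verified, Part 1) immediately implies Part 2) via the homogeneity formula for $u$, so there is no further analytic difficulty.
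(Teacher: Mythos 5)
Your proposal is correct and follows essentially the same strategy as the paper: strong induction using Definition~2 twice, the inductive hypothesis at two indices, the degree-$d$ and degree-$h$ homogeneity of $F_1$ and $H$ to commute scalar factors past composition, and the degree recursion to match exponents of $H$; the only difference is a cosmetic re-indexing (the paper computes $F_n\circ F$ and shows it equals $H^{\d(f^{n-n_0})}F_{n+1}$, while you expand $F_n$ directly via Definition~2 and substitute for $F_{n-1}$), which lands on the identical cancellation. Part~2) is derived in both cases as an immediate consequence of Part~1) together with the homogeneity formula for potentials.
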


%%%%%%%%%%%%%%%%%%%%%%%%%%%%%%%%%%%%%%%%
%%%%%%%%%%%%%%%%%%%%%%%%%%%%%%%%%%%%%%%%

\begin{proof}
We only need to prove identity  (\ref{eq1_prop_recurrence}) since identity (\ref{eq2_prop_recurrence})
is an immediate consequence of (\ref{eq1_prop_recurrence}) using (\ref{eq2.0})--(\ref{eq2.1}).
Observe that by the hypothesis on $f,$   (\ref{eq1_prop_recurrence}) is true for $n=1,\ldots,n_0.$
Supposing  (\ref{eq1_prop_recurrence}) true for $n,$ we need to prove it for $n+1.$

We have that
\begin{eqnarray*}
F_{n}\circ F&=&\frac{F\circ F_{n-1}\circ F}{H\circ F_{n-n_0-1}\circ F}=\frac{F( F_{n-1}\circ F)}{H( F_{n-n_0-1}\circ F)}\\
&=&\frac{F(H^{\d(f^{n-n_0-1})} F_n)}{H( H^{\d(f^{n-2n_0-1})} \cdot F_{n-n_0})}
=\frac{H^{\d(f)\cdot \d(f^{n-n_0-1})}\cdot F\circ F_n}{       H^{h\cdot\d(f^{n-2n_0-1})} \cdot H\circ F_{n-n_0}}\\
&=&   H^{\d(f)\cdot \d(f^{n-n_0-1})-h\cdot\d(f^{n-2n_0-1})}    F_{n+1}= H^{\d(f^{n-n_0})}    F_{n+1},
\end{eqnarray*}
where the first equality follows  from Definition 2, the third one from the hypothesis of induction,
and the last one from identity (\ref{eq1_section_preparatory_results}).
Hence,  (\ref{eq1_prop_recurrence}) is true for $n+1.$
\end{proof}

Put, for $N\geq 1,$
\begin{equation*}
\sigma_N:=\frac{1}{N}\sum\limits_{n=0}^{N-1} \frac{1}{\d(f^n)} (f^n)^{\ast}\omega.
\end{equation*}
Then  $(\sigma_N)$ is a sequence of positive closed currents of bidegree $(1,1)$ such that  $\Vert \sigma_N\Vert=1.$
Therefore, we can extract a  convergent subsequence $(\sigma_{N_j}):$    $\sigma_{N_j}\to \sigma.$ Here,
$\sigma$ is a positive closed currents of bidegree $(1,1)$ such that  $\Vert \sigma\Vert=1.$

%%%%%%%%%%%%%%%%%%%%%%%%%%%%%%%%%%%%%%%%%%%%%%
%%%%%%%%%%%%%%%%%%%%%%%%%%%%%%%%%%%%%%%%%%%%%%

\begin{lem}\label{prop_functional_equation_sigma}
 The following functional equation holds
    \begin{equation*}%\label{eq1_Theorem_B}
    f^{\ast}\sigma=\lambda\cdot \sigma+\frac{d-\lambda}{h}\cdot [H].
    \end{equation*}
\end{lem}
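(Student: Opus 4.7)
The plan is to apply $f^{*}$ to each $\sigma_N$ using Lemma \ref{prop_recurrence} and then pass to the limit along the convergent subsequence $N_j$. Invoking identity (\ref{eq2_prop_recurrence}) with $T=\omega$ (and the convention $\d(f^m)=0$ for $m<0$), after shifting $n\to n+1$ one obtains the uniform formula
\begin{equation*}
f^{*}\bigl((f^n)^{*}\omega\bigr) = (f^{n+1})^{*}\omega + \d(f^{n-n_0})\cdot [H=0], \qquad n\geq 0.
\end{equation*}
Dividing by $\d(f^n)$, averaging over $0\leq n\leq N-1$, and writing $\tau_n := (f^n)^{*}\omega/\d(f^n)$ yields
\begin{equation*}
f^{*}\sigma_N = \frac{1}{N}\sum_{n=0}^{N-1}\frac{\d(f^{n+1})}{\d(f^n)}\,\tau_{n+1} + \biggl(\frac{1}{N}\sum_{n=0}^{N-1}\frac{\d(f^{n-n_0})}{\d(f^n)}\biggr)[H=0].
\end{equation*}

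Because $\lambda$ is a simple root of $P(t)$, Lemma \ref{lem_degree} Part 1) gives $\d(f^n)=\lambda^n(c_1+\mathcal{O}(\rho^n))$ for some $c_1>0$ and $0<\rho<1$. Consequently $\d(f^{n+1})/\d(f^n) = \lambda + \mathcal{O}(\rho^n)$ and $\d(f^{n-n_0})/\d(f^n) = \lambda^{-n_0} + \mathcal{O}(\rho^n)$, both with absolutely summable errors, and the identity $P(\lambda)=0$ rewrites as $\lambda^{-n_0} = (d-\lambda)/h$.

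Now decompose the first sum above as $\lambda\sigma_N + \frac{\lambda}{N}(\tau_N - \tau_0) + \frac{1}{N}\sum_{n=0}^{N-1}\varepsilon_n\,\tau_{n+1}$, where $\varepsilon_n := \d(f^{n+1})/\d(f^n) - \lambda = \mathcal{O}(\rho^n)$. The normalized potentials of the $\tau_n$'s lie in the (compact) normalized cone $\mathcal{P}$, hence are uniformly bounded in $L^1_{loc}$; since $\sum|\varepsilon_n|<\infty$, the last two pieces vanish in $L^1_{loc}$ at the level of potentials and thus, after applying $\mathcal{L}$, as currents. The coefficient of $[H=0]$ converges to $(d-\lambda)/h$ by Ces\`aro. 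Finally, since $\mathcal{L}$ is an isomorphism and the composition $u\mapsto u\circ F$ is $L^1_{loc}$-continuous on $\mathcal{P}$, the weak convergence $\sigma_{N_j}\to\sigma$ implies $f^{*}\sigma_{N_j}\to f^{*}\sigma$; passing $N=N_j\to\infty$ in the identity above yields the desired functional equation. The main obstacle is controlling the sign-indefinite remainder $\frac{1}{N}\sum \varepsilon_n\tau_{n+1}$; this is made possible precisely by the simple-root hypothesis, which forces $Q$ to be a constant and gives the geometric decay of $\varepsilon_n$, whereas in the multiple-root case only $\varepsilon_n=\mathcal{O}(1/n)$ would be available and the argument would break down.
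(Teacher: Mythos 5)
Your proof is correct and follows essentially the same route as the paper: apply the recurrence of Lemma~\ref{prop_recurrence} to each $(f^n)^{*}\omega$, normalize, average in $n$, and pass to the limit along $N_j$, with the coefficient of $[H]$ pinned down either by a Ces\`aro limit (as you do) or by a mass comparison of the two sides of the resulting equation (as the paper does) --- a cosmetic difference. Your closing remark, however, overstates the role of the simple-root hypothesis for \emph{this} lemma: the paper controls the remainder $\tfrac1N\sum_{n<N}\varepsilon_n\tau_{n+1}$ using only the $O(1/n)$ bound from Part~2) of Lemma~\ref{lem_degree}, and since the Ces\`aro averages $\tfrac1N\sum_{n<N}|\varepsilon_n|\lesssim \tfrac{\log N}{N}$ still tend to zero, the double-root case would \emph{not} break the argument here; you do not actually need $\sum|\varepsilon_n|<\infty$, only that the Ces\`aro averages of $|\varepsilon_n|$ vanish. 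The simple-root hypothesis is genuinely indispensable later, in the proof of Part~(i) of the Main Theorem, where Lemma~\ref{lem_elementary} requires that all roots of $t^{n_0}-\tfrac{d-\lambda}{\lambda}\sum_{j=1}^{n_0}t^{n_0-j}$ have modulus strictly less than one.
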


%%%%%%%%%%%%%%%%%%%%%%%%%%%%%%%%%%%%%%%%%%%%%
%%%%%%%%%%%%%%%%%%%%%%%%%%%%%%%%%%%%%%%%%%%%%
\begin{proof}
We have that
\begin{eqnarray*}
f^{\ast}\sigma_N-\lambda\sigma_N   &=& \frac{1}{N}\sum\limits_{n=0}^{N-1} \left( \frac{f^{\ast}((f^{n})^{\ast}\omega)}{\d(f^{n})}-
\frac{\lambda (f^{n+1})^{\ast}\omega  }{\d(f^{n+1})}
 \right) +\frac{1}{N}\Big( \lambda \frac{(f^N)^{\ast} \omega}{\d(f^N)}  -\lambda\omega\Big)\\
&=&  \frac{1}{N}\sum\limits_{n=0}^{N-1} \frac{f^{\ast}(f^{n})^{\ast}\omega- (f^{n+1})^{\ast}\omega}{\d(f^{n})}
   \\
 &+&    \Big(    \frac{1}{N}\sum\limits_{n=0}^{N-1} \frac{\d(f^{n+1})-\lambda\d(f^n)}{\d(f^{n})}
 \cdot\frac{(f^{n+1})^{\ast}\omega}{\d(f^{n+1})}
 +\frac{\lambda}{N}\left( \frac{(f^N)^{\ast} \omega}{\d(f^N)}  -\omega \right)\Big)\\
& :=&  I+II.
\end{eqnarray*}
Applying Lemma \ref{prop_recurrence} yields that
\begin{equation}\label{eq1_prop_functional_equation_sigma}
I=\Big( \frac{1}{N}\sum\limits_{n=0}^{N-1} \frac{\d(f^{n-n_0})}{\d(f^{n})}\Big)  [H].
\end{equation}
 Recall from  (\ref{eq2.1})  that $ \Vert \frac{(f^{n})^{\ast}\omega}{\d(f^{n})}\Vert=1, $ $n\geq 0.$ Therefore,
 \begin{equation*}
 \frac{\lambda}{N}\left( \Big\Vert \frac{(f^N)^{\ast} \omega}{\d(f^N)} \Big\Vert +\Vert\omega\Vert \right)\to 0
 \quad \text{as}\ N\to\infty.
 \end{equation*}
 On the other hand, applying  the first estimate of Part 2) of  Lemma \ref{lem_degree}  yields that
\begin{equation*}
\frac{1}{N}\sum\limits_{n=0}^{N-1} \frac{\vert \d(f^{n+1})-\lambda\d(f^n)\vert}{\d(f^{n})}
 \cdot \Big\Vert\frac{(f^{n+1})^{\ast}\omega}{\d(f^{n+1})}\Big\Vert\leq  \frac{C}{N}\sum\limits_{n=0}^{N-1}
  \frac{1}{n}\leq  {C \log N\over N} \to 0,\quad  \text{as}\ N\to\infty.
\end{equation*}
Inserting the last two estimates into the expression of $(II),$ we obtain that $II\to 0$ as $N\to\infty.$
This, combined with (\ref{eq1_prop_functional_equation_sigma}) implies that
 \begin{equation*}%\label{eq1_Theorem_B}
    f^{\ast}\sigma=\lambda\cdot \sigma+\mu [H]
    \end{equation*}
    for some $\mu\in\R.$ By equating the mass of both sides in the last equation
    and  using  (\ref{eq2.1}),   the lemma follows.
    \end{proof}

    By Lemma \ref{prop_functional_equation_sigma}, we can fix a potential\footnote{ See Subsection \ref{subsection_meromorphic_maps_currents_potientials}.} $\Theta$ of $\sigma$ such that
    \begin{equation}\label{eq_for_potential_Theta}
    \Theta\circ F=\lambda\Theta+ \frac{d-\lambda}{h}\log{ \vert H\vert}.
    \end{equation}

%%%%%%%%%%%%%%%%%%%%%%%%%%%%%%%%%%%%%%%%%%%%%%%%%%
%%%%%%%%%%%%%%%%%%%%%%%%%%%%%%%%%%%%%%%%%%%%%%%%%%

\begin{lem}\label{prop_recurrence_formula_for_Theta}
 \begin{equation*}
  \Theta\circ F_n=
 \begin{cases}
\lambda^n\Theta+\frac{d-\lambda}{h}\cdot\sum\limits_{j=1}^n \lambda^{j-1}\log{\vert H\circ F_{n-j}\vert},
  &n=1,\ldots,n_0  ,\\
\lambda^n\Theta+\frac{d-\lambda}{h}\cdot\sum\limits_{j=1}^{n_0} \lambda^{j-1}\log{\vert H\circ F_{n-j}\vert} ,
&  n>n_0.
\end{cases}
 \end{equation*}
\end{lem}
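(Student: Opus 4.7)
My plan is to prove the lemma by induction on $n$, splitting the inductive step according to whether $n\leq n_0$ or $n>n_0$. The base case $n=1$ is just (\ref{eq_for_potential_Theta}) rewritten with $F_{0}=\Id$, so that $\log|H\circ F_{0}|=\log|H|$ plays the role of the single $j=1$ summand.

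For the range $1<n\leq n_{0}$, Definition 2 gives $F_{n}=F_{1}\circ F_{n-1}$, so $\Theta\circ F_{n}=(\Theta\circ F)\circ F_{n-1}$; applying (\ref{eq_for_potential_Theta}) and the inductive hypothesis, then reindexing $j\mapsto j+1$ in the sum, produces the first displayed formula (the extra term $\tfrac{d-\lambda}{h}\log|H\circ F_{n-1}|$ is exactly the missing $j=1$ contribution). For $n>n_{0}$, Definition 2 reads $F_{n}=(F_{1}\circ F_{n-1})/(H\circ F_{n-n_{0}-1})$, i.e.\ $F_{1}\circ F_{n-1}=(H\circ F_{n-n_{0}-1})\cdot F_{n}$ as polynomial maps $\C^{k+1}\to\C^{k+1}$. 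Applying $\Theta$ and invoking its scalar homogeneity $\Theta(\alpha z)=\log|\alpha|+\Theta(z)$ yields
\[
\Theta\circ F_{n}=(\Theta\circ F)\circ F_{n-1}-\log|H\circ F_{n-n_{0}-1}|.
\]
Combining this with (\ref{eq_for_potential_Theta}) and with the inductive hypothesis for $\Theta\circ F_{n-1}$ (which takes slightly different shapes when $n-1\leq n_{0}$ versus $n-1>n_{0}$, but in either case the sum has exactly $n_{0}$ terms, so the algebra is the same), and reindexing, gives
\[
\Theta\circ F_{n}=\lambda^{n}\Theta+\frac{d-\lambda}{h}\sum_{j=1}^{n_{0}+1}\lambda^{j-1}\log|H\circ F_{n-j}|-\log|H\circ F_{n-n_{0}-1}|.
\]

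The decisive --- and really the only non-routine --- step is the cancellation that truncates this sum at $n_{0}$. From $P(\lambda)=0$, i.e.\ $\lambda^{n_{0}+1}-d\lambda^{n_{0}}+h=0$, one reads off $(d-\lambda)\lambda^{n_{0}}=h$, so $\tfrac{(d-\lambda)\lambda^{n_{0}}}{h}=1$. The $j=n_{0}+1$ term of the sum is therefore $\log|H\circ F_{n-n_{0}-1}|$, which cancels the leftover negative copy, leaving precisely $\sum_{j=1}^{n_{0}}\lambda^{j-1}\log|H\circ F_{n-j}|$ as claimed. This use of the characteristic-polynomial identity for $\lambda$ is what makes the QAS class the natural one for the recursion to close up; aside from this algebraic input, the proof is index bookkeeping.
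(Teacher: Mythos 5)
Your proof is correct, and while it follows the same inductive scaffold as the paper it chooses a decomposition that closes the algebra in one stroke. You work directly from Definition 2, writing $F_n=(F_1\circ F_{n-1})/(H\circ F_{n-n_0-1})$ for $n>n_0$, so that by the homogeneity of $\Theta$ (with $\Vert\sigma\Vert=1$) one has $\Theta\circ F_n=(\Theta\circ F)\circ F_{n-1}-\log\vert H\circ F_{n-n_0-1}\vert$. Applying the functional equation for $\Theta\circ F$ at the outer level and the inductive hypothesis for $\Theta\circ F_{n-1}$ unchanged, the sum reindexes by a unit shift and the single leftover $-\log\vert H\circ F_{n-n_0-1}\vert$ is absorbed by the $j=n_0+1$ term of the shifted sum, precisely because $(d-\lambda)\lambda^{n_0}/h=1$, i.e.\ $P(\lambda)=0$. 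The paper instead uses the equivalent right-composition formula $F_n=(F_{n-1}\circ F)/H^{\operatorname{d}(f^{n-n_0-1})}$ from Lemma \ref{prop_recurrence}: the inductive hypothesis for $\Theta\circ F_n$ is postcomposed with $F$, so each $\log\vert H\circ F_{n-j}\vert$ becomes $\log\vert H\circ F_{n-j}\circ F\vert$ and must be rewritten via $F_{n-j}\circ F=H^{\operatorname{d}(f^{n-j-n_0})}F_{n-j+1}$; the scalar corrections produced this way accumulate into an auxiliary quantity $S_n$ whose vanishing the paper proves separately via the degree recursion and the initial values. Both routes turn on $P(\lambda)=0$ at the decisive moment, but your decomposition isolates that identity in a single term-for-term cancellation, avoids the auxiliary $S_n=0$ computation, and does not even require Lemma \ref{prop_recurrence} for this step.
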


%%%%%%%%%%%%%%%%%%%%%%%%%%%%%%%%%%%%%%%%%%%%%%%%%%%
%%%%%%%%%%%%%%%%%%%%%%%%%%%%%%%%%%%%%%%%%%%%%%%%%%%

\begin{proof}
We proceed by induction. For $n=1$ the above  formula follows from  (\ref{eq_for_potential_Theta}).

Suppose that the above  inductive formula is true for $n.$ We need to show it for $n+1.$
Observe that
\begin{eqnarray*}
\Theta\circ F_{n+1}&=&\Theta\circ F_n\circ F-\d(f^{n-n_0})\log \vert H\vert\\
&=&\lambda^n\Theta\circ F+\frac{d-\lambda}{h}\cdot\sum\limits_{j=1}^{n_0} \lambda^{j-1}\log{\vert H\circ F_{n-j}\circ F\vert}
-\d(f^{n-n_0})\log \vert H\vert\\
&=&\lambda^{n+1}\Theta+\frac{d-\lambda}{h}\cdot \lambda^n\log{\vert H\vert}-\d(f^{n-n_0})\log \vert H\vert\\
&+&\frac{d-\lambda}{h}\cdot\sum\limits_{j=1}^{n_0} \lambda^{j-1}\log{\vert H(H^{\d(f^{n-j-n_0})}\cdot F_{n-j+1})\vert}\\
&=&\lambda^{n+1}\Theta+\frac{d-\lambda}{h}\cdot\sum\limits_{j=1}^{n_0} \lambda^{j-1}\log{\vert H\circ F_{n-j+1}\vert}\\
&+&\Big (\frac{d-\lambda}{h}\cdot \lambda^n
+\frac{d-\lambda}{h}\cdot\sum\limits_{j=1}^{n_0} \lambda^{j-1}h\d(f^{n-j-n_0})  -\d(f^{n-n_0})      \Big)   \log{\vert H\vert}\\
&=& \lambda^{n+1}\Theta+\frac{d-\lambda}{h}\cdot\sum\limits_{j=1}^{n_0} \lambda^{j-1}\log{\vert H\circ F_{n-j+1}\vert}\\
&+&\Big ( \lambda^{n-n_0}
+\frac{d-\lambda}{h}\cdot\sum\limits_{j=1}^{n_0} \lambda^{j-1}h\d(f^{n-j-n_0})  -\d(f^{n-n_0})      \Big)   \log{\vert H\vert},
\end{eqnarray*}
where the first equality follows from (\ref{eq1_prop_recurrence}), the second one from the hypothesis of induction,
the third one from  (\ref{eq_for_potential_Theta}) and  (\ref{eq1_prop_recurrence}), and the last one
from (\ref{eq2_section_preparatory_results}).
Therefore,   the proof of the inductive formula will be complete for $n+1$ if one can show that for all $n\geq 0,$ $S_n=0,$ where
\begin{equation*}%\label{eq1_prop_recurrence_formula_for_Theta}
 S_n:=\lambda^{n}
+(d-\lambda)\cdot\sum\limits_{j=1}^{n_0} \lambda^{j-1}\d(f^{n-j})  -\d(f^{n})     .
\end{equation*}
It follows from (\ref{eq1_section_preparatory_results})--(\ref{eq2_section_preparatory_results}) and the above formula for $S_n$ that
$S_{n}-dS_{n-1}+hS_{n-n_0-1}=0$ for all $n\geq n_0+1.$ Hence, the proof will be complete if one can show that
$S_n=0$ for $n=0,\ldots,n_0.$ But the last assertion is equivalent to the identity
 \begin{equation*}%\label{eq1_prop_recurrence_formula_for_Theta}
 \lambda^{n}
+(d-\lambda)\cdot\sum\limits_{j=1}^{n_0} \lambda^{j-1} \d(f)^{n-j}  =\d(f)^n     ,
\end{equation*}
which is  clearly  true by using the  convention preceding Lemma   \ref{prop_recurrence}. Hence, the proof is complete.
\end{proof}

The following elementary lemma is needed.
\begin{lem}\label{lem_elementary} Let $(X,\mu)$ be a measurable space  and
$(g_n)_{n=0}^{\infty},\ (h_n)_{n=1}^{\infty}\subset L^1(X,\mu)$ two sequence of complex-valued functions with  $\Vert  h_n\Vert_{L^1(X)}\leq 1,$ $n\geq 1.$  Let  $P(t):=t^{n_0}+\alpha_1t^{n_0-1}+\cdots+\alpha_{n_0}$ be a polynomial whose roots are of modulus  strictly smaller than $1.$
Let $(\epsilon_n)_{n=n_0}^{\infty}\subset \R^{+}$ be a sequence  with $\lim\limits_{n\to\infty} \epsilon_n=0.$
Let  $(\alpha_{1n})_{n_0}^{\infty},\ldots,(\alpha_{n_0n})_{n_0}^{\infty}\subset \C$  be $n_0$ sequences     such that
for all $n\geq n_0$  and $1\leq  j\leq n_0,$
\begin{itemize}
\item[$\bullet$] $g_n+\alpha_{1n} g_{n-1}+\cdots+\alpha_{n_0n}g_{n-n_0}=h_n;$
    \item[$\bullet$]  $\vert  \alpha_{jn}-\alpha_j\vert<\epsilon_n. $
    \end{itemize}
 Then $(g_n)_{n=0}^{\infty}$ is  bounded in  $L^1(X,\mu)$.
\end{lem}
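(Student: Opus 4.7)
The plan is to recast the scalar recurrence as a first-order vector recurrence driven by a sequence of companion matrices, and then exploit the spectral hypothesis on $P(t)$ via a suitably chosen norm. Concretely, for $n\geq n_0-1$ set $V_n:=(g_n,g_{n-1},\ldots,g_{n-n_0+1})^T\in L^1(X,\mu)^{n_0}$ and rewrite the given identity as
\begin{equation*}
V_n=A_n V_{n-1}+H_n,\qquad n\geq n_0,
\end{equation*}
where $H_n:=(h_n,0,\ldots,0)^T$ and $A_n$ is the companion matrix whose first row is $(-\alpha_{1n},\ldots,-\alpha_{n_0 n})$ with the usual sub-diagonal of ones. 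The hypothesis $|\alpha_{jn}-\alpha_j|<\epsilon_n$ with $\epsilon_n\to 0$ forces $A_n\to A$ in any matrix norm on $\C^{n_0}$, where $A$ is the companion matrix of $P(t)$. The eigenvalues of $A$ are exactly the roots of $P$, all of modulus strictly smaller than $1$.

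Next I would exploit the spectral hypothesis. Let $r<1$ denote the spectral radius of $A$, and pick $\rho\in(r,1)$. By the standard consequence of Gelfand's formula, there is a norm $\|\cdot\|_*$ on $\C^{n_0}$ for which $\|A\|_*\leq\rho$. Since $A_n\to A$, one can choose $N$ and $\rho'\in(\rho,1)$ such that $\|A_n\|_*\leq\rho'$ for all $n\geq N$. I would then transfer $\|\cdot\|_*$ to a norm on $L^1(X,\mu)^{n_0}$ by
\begin{equation*}
\|W\|_{L^1,*}:=\int_X\bigl\|(W_1(x),\ldots,W_{n_0}(x))\bigr\|_*\,d\mu(x),
\end{equation*}
which is equivalent to the usual $L^1$ vector norm since all norms on $\C^{n_0}$ are equivalent. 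For this lifted norm, the pointwise inequality $\|(A_nW)(x)\|_*\leq\|A_n\|_*\|W(x)\|_*$ integrates to $\|A_nW\|_{L^1,*}\leq\rho'\|W\|_{L^1,*}$ for $n\geq N$, while $\|H_n\|_{L^1,*}\leq C\|h_n\|_{L^1}\leq C$ uniformly in $n$ for some constant $C$ depending only on the norm.

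Iterating the recurrence $V_n=A_nV_{n-1}+H_n$ for $n\geq N$ then gives
\begin{equation*}
\|V_n\|_{L^1,*}\leq(\rho')^{n-N}\|V_N\|_{L^1,*}+\frac{C}{1-\rho'},
\end{equation*}
which is bounded uniformly in $n$. Since $g_n$ is the first coordinate of $V_n$ and $\|\cdot\|_{L^1,*}$ is equivalent to the usual $L^1$ vector norm, this proves that $(g_n)_{n\geq N}$ is bounded in $L^1(X,\mu)$; the initial segment $g_0,\ldots,g_{N-1}$ consists of finitely many $L^1$ functions and is therefore bounded too, finishing the argument. The main conceptual step, and the only part that requires care, is the selection of the norm $\|\cdot\|_*$ adapted to the spectral radius of the limit companion matrix: once that is in place, the perturbation estimate $\|A_n\|_*\leq\rho'<1$ and the lift to $L^1(X,\mu)^{n_0}$ are routine.
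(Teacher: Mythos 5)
Your proof is correct, and it takes a genuinely different, cleaner route than the paper's. The paper works directly at the scalar level: it introduces auxiliary linear combinations $f_{jn}:=g_n+\beta_{j1}g_{n-1}+\cdots+\beta_{j,n_0-1}g_{n-n_0+1}$ built from the coefficients of $P(t)/(t-t_j)$, uses a partial-fraction-type identity $\sum_j \gamma_j\,P(t)/(t-t_j)\equiv t^{n_0-1}$ to recover $g_{n-1}$ from the $f_{jn}$, and then observes that each $f_{jn}$ satisfies an approximate first-order contraction $|f_{jn}-t_j f_{j,n-1}|\leq |h_n|+\sum_i\epsilon_{n-i}|g_{n-i}|$; this forces a two-case analysis according to whether the roots of $P$ are simple or not, with a more involved version of the decomposition in the repeated-root case. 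You instead pass to the companion-matrix reformulation $V_n=A_nV_{n-1}+H_n$, invoke the standard adapted-norm consequence of Gelfand's formula to produce a norm on $\C^{n_0}$ in which $\|A\|_*<1$, and use $A_n\to A$ to get $\|A_n\|_*\leq\rho'<1$ for $n$ large; the contraction estimate then lifts to $L^1(X,\mu)^{n_0}$ by integrating the pointwise inequality. This unifies the two cases (the adapted norm handles Jordan blocks automatically), avoids the explicit combinatorics with the $\beta_{ji}$ and $\gamma_j$, and makes the role of the spectral-radius hypothesis transparent. The only thing to keep in mind is that $\|\cdot\|_*$ must be a vector norm whose induced operator norm controls $A$, so that the pointwise bound $\|(A_nW)(x)\|_*\leq\|A_n\|_*\,\|W(x)\|_*$ holds and can be integrated; you state this correctly. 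Both proofs rest on the same contraction mechanism, but your linear-algebraic packaging is shorter and arguably more robust.
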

\begin{proof}
 Let  $t_1,\ldots,t_{n_0}$  be  the roots of
$P(t).$ Consider  two cases.

\noindent {\bf Case 1:}
$t_1,\ldots,t_{n_0}$ are distinct.

We can  check that  if $\gamma_1,\ldots,\gamma_{n_0}\in\C$  such that
 $\sum_{j=1}^{n_0} \gamma_j\cdot \frac{P(t)}{t-t_j}\equiv 0$
  then  $\gamma_1=\cdots=\gamma_{n_0}=0.$ Consequently, there  exist $\gamma_1,\ldots,\gamma_{n_0}\in\C$ such that
 \begin{equation}\label{eq0_lem_elementary}
 \sum_{j=1}^{n_0}\gamma_j\cdot \frac{P(t)}{t-t_j}\equiv t^{n_0-1}.
 \end{equation}
Next, write
\begin{equation}\label{eq1_lem_elementary}
\frac{P(t)}{t-t_j}=t^{n_0-1}+\beta_{j1}t^{n_0-2}+\cdots +\beta_{jn_0-1},\qquad  j=1,\ldots,n_0 .
\end{equation}
Put
\begin{equation*}
f_{jn}:=g_n+\beta_{j1}g_{n-1}+\cdots + \beta_{jn_0-1} g_{n-n_0+1},\qquad  n\geq n_0-1,\ 1\leq j\leq n_0.
\end{equation*}
Hence, (\ref{eq0_lem_elementary}) becomes
\begin{equation}\label{eq2_lem_elementary}
 \sum_{j=1}^{n_0}\gamma_j f_{jn}= g_{n-1},\qquad   n\geq n_0-1,\ 1\leq j\leq n_0.
 \end{equation}
The formula  for $f_{jn},$  the first  $\bullet$   of the hypothesis and  identity
(\ref{eq1_lem_elementary}) together  imply that
\begin{equation*}
\vert f_{jn}-t_j   f_{j,n-1}\vert\leq \vert h_n\vert +
 \epsilon_{n-1}\vert g_{n-1}\vert+\cdots+\epsilon_{n-n_0}\vert g_{n-n_0}\vert,\qquad  n\geq n_0,\  1\leq j\leq n_0.
\end{equation*}
 It follows from the last estimate
  and (\ref{eq2_lem_elementary}) that there  is   a  finite positive  constant $C$
 such that
 \begin{equation*}
 M^{'}_n\leq \rho^{'} M^{'}_{n-1}+  C (\epsilon_{n-1}+\cdots+\epsilon_{n-n_0})(M^{'}_n+\cdots+M^{'}_{n-n_0+1}) +\Vert h_n\Vert_{L^1(X)}, \qquad  n \geq 2n_0,
 \end{equation*}
 where  $M^{'}_n:=\max\{\Vert f_{1n}\Vert_{L^1(X)},\ldots,\Vert f_{n_0n}\Vert_{L^1(X)} \}$ for all $n\geq n_0-1,$
 and  $\rho^{'}:=\max\limits_{1\leq j\leq n_0} \vert t_j\vert.$ Observe that  $0<\rho^{'}<1$ since
 $\vert t_j\vert <1.$

Fix a constant $\rho:$  $\rho^{'}<\rho<1.$
 Using the above estimate for $M^{'}_n$  repeatedly and taking into account that  $\lim\limits_{n\to\infty}\epsilon_n=0,$
 we may  find     a  sufficiently large integer $N>n_0$ such that
 \begin{equation*}
 M_{n}\leq \rho M_{n-1}+  2 \big(\sum\limits_{j=1}^{n_0} \Vert h_{nn_0-j}\Vert_{L^1(X)}\big), \qquad  n > N,
 \end{equation*}
 where  $M_n:=\max\{\ M^{'}_{nn_0-1},\ldots, M^{'}_{nn_0-n_0} \}$ for all $n\geq N .$
 Consequently,
\begin{equation*}
M_n\leq \frac{2}{1-\rho}\cdot\sum\limits_{k=0}^{n-N-1}  \rho^k  \big (\sum\limits_{j=1}^{n_0} \Vert h_{(n-k)n_0-j}\Vert_{L^1(X)}\big) +\rho^{n-N}M_N
\end{equation*}
 for all $n\geq N.$ This, combined with the hypothesis that  $\Vert  h_n\Vert_{L^1(X)}\leq 1$ for all $n\geq 1,$
 implies the existence of  a finite positive  constant $M$ such that
\begin{equation*}
\big\Vert f_{jn}\big\Vert_{L^1(X)}  <M,\qquad  n\geq N,\  1\leq j\leq n_0.
\end{equation*}
This, coupled with (\ref{eq2_lem_elementary}), gives  the desired  conclusion.

 \noindent {\bf Case 2:}
$t_1,\ldots,t_{n_0}$ are not distinct.

 Let $t_1,\ldots,t_r$  be  all distinct  roots of $P(t)$  with multiplicity
$m_1,\ldots,m_r$ respectively.
We  can  choose  $\gamma_{11},\ldots,\gamma_{1m_1},\ldots,\gamma_{r1},\ldots,\gamma_{rm_r}\in\C$ such that
 \begin{equation*}
 \sum_{j=1}^{r} \frac{(\gamma_{j1}+\gamma_{j2}t+\cdots+\gamma_{jm_j}t^{m_j-1}            )P(t)}{(t-t_j)^{m_j}}\equiv t^{n_0-1}.
 \end{equation*}
 The   remaining part of the proof follows  along the same lines as  in the previous case.
\end{proof}

Let us recall that a {\it quasi-plurisubharmonic function on $\P^k$} is an upper semi-continuous function $\phi:\ \P^k\rightarrow [-\infty,\infty)$
which is locally given as  the sum of a plurisubharmonic and a smooth function. The following estimate due to V.  Guedj (see Proposition 1.3 in \cite{gu})   is needed.
\begin{lem}\label{lem_Guedj}
There exists a positive finite constant $C$  such that for all quasi-plurisubharmonic functions $\phi$ with  $\max\phi=0,$ $\ddc\phi\geq -\omega,$
and for all $n\in\N,$
\begin{equation*}
\int\limits_{\P^k}(\vert \phi\vert\circ f^n)\omega^k\leq C\sum\limits_{j=0}^n \d(f^j).
\end{equation*}
\end{lem}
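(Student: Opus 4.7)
\noindent\emph{Proof proposal.} The plan is to reduce the estimate to the standard $L^1$-bound for $\omega$-plurisubharmonic functions, via a decomposition of the iterated pullback current $(f^n)^{*}\omega$ as a multiple of $\omega$ plus a $\ddc$-exact correction, and then to scale.

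Recall the absolute bound: every $\omega$-psh function $v$ on $\P^k$ with $\sup v=0$ satisfies $\int_{\P^k}|v|\omega^k\leq C_0$ for some $C_0=C_0(k)$. Since $(f^n)^{*}\omega$ is a positive closed $(1,1)$-current of mass $\d(f^n)$, write
$(f^n)^{*}\omega=\d(f^n)\,\omega+\ddc u_n,$
where $u_n$ is a quasi-psh function normalized by $\sup u_n=0$; applying the absolute bound to $u_n/\d(f^n)$ yields $\int|u_n|\omega^k\leq C_0\,\d(f^n)$. The hypothesis $\ddc\phi\geq-\omega$ pulls back to $\ddc(\phi\circ f^n)\geq-(f^n)^{*}\omega$, so
$g_n:=\bigl(\phi\circ f^n+u_n\bigr)/\d(f^n)$
is an $\omega$-psh function on $\P^k$. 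Moreover $g_n\leq 0$ since both $\phi\leq 0$ and $u_n\leq 0$.

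Set $M_n:=\sup g_n\leq 0$. Applying the absolute bound to $g_n-M_n$ gives $\int|g_n|\omega^k\leq C_0+|M_n|$, hence
$\int_{\P^k}|\phi\circ f^n|\omega^k\leq\int|u_n|\omega^k+\d(f^n)\int|g_n|\omega^k\leq C(1+|M_n|)\,\d(f^n).$
Combined with the inequality $\d(f^n)\leq C'\sum_{j=0}^n\d(f^j)$ from Part 2) of Lemma \ref{lem_degree}, the lemma would follow at once from a uniform bound $|M_n|\leq C_1$ independent of $n$.

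The main obstacle is precisely to establish this uniform lower bound on $M_n$: a priori the normalization point $p_n\in\P^k$ where $u_n(p_n)=0$ could be mapped by $f^n$ into a very singular stratum of the polar set of $\phi$, forcing $M_n\to-\infty$. I would address this by a two-step regularization argument. First, approximate $\phi$ from above by a decreasing sequence of smooth $\omega$-psh functions $\phi_\varepsilon\downarrow\phi$ via Demailly's regularization; for each $\phi_\varepsilon$ the analogous quantity $M_n^\varepsilon$ is trivially bounded (since $\phi_\varepsilon$ is bounded below), so the target estimate holds for $\phi_\varepsilon$ with a constant a priori depending on $\varepsilon$. Second, pass to the limit $\varepsilon\to 0$ by monotone convergence on the left-hand side, while on the right-hand side one invokes a Chern--Levine--Nirenberg type inequality to show that the constant stays $\varepsilon$-uniform; the point is that the estimate really only depends on the $\omega$-psh class of $\phi$, not on its pointwise values. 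This last step is the technical heart of the argument, and is where the structure of $\P^k$ (compactness plus the explicit control of $\omega$-psh families via their mass) is essential.
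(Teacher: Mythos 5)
The paper offers no proof of this lemma: it is quoted verbatim as Proposition 1.3 of Guedj's article \cite{gu}, so there is no in-paper argument to compare yours against. Evaluating your proposal on its own terms, the setup is correct but the reduction you perform carries no weight, and the sketch you give to close it would not work.

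Concretely, you correctly introduce the normalized potential $u_n$ of $(f^n)^\ast\omega$ (with $\sup u_n=0$, $\int|u_n|\,\omega^k\le C_0\,\d(f^n)$), form the $\omega$-psh function $g_n=(\phi\circ f^n+u_n)/\d(f^n)\le 0$, and reduce the lemma to a uniform lower bound on $M_n:=\sup g_n$. But by the sub-mean-value inequality $\sup g_n\ge\int g_n\,\omega^k$ one has
\begin{equation*}
|M_n|\ \le\ \frac{1}{\d(f^n)}\int_{\P^k}|\phi\circ f^n|\,\omega^k + C_0,
\end{equation*}
while your own computation gives the reverse $\int|\phi\circ f^n|\,\omega^k\le C(1+|M_n|)\,\d(f^n)$. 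Thus ``bound $M_n$ uniformly'' is, up to additive constants, the very estimate you are trying to prove --- indeed a slightly stronger one, since it would give $O(\d(f^n))$ rather than $O(\sum_{j\le n}\d(f^j))$, and for slowly growing degrees these differ. All of the content of Guedj's inequality is concentrated in exactly the step you defer. (Incidentally, the inequality $\d(f^n)\le\sum_{j=0}^n\d(f^j)$ that you attribute to Part 2 of Lemma \ref{lem_degree} is trivial; that lemma gives the nontrivial reverse bound, valid only under the paper's standing hypothesis $\lambda>1$, which the present lemma does not assume.)

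The regularization you then sketch does not fill this gap. Replacing $\phi$ by a smooth decreasing approximation $\phi_\varepsilon$ yields a bound on $|M_n^\varepsilon|$ only through $\|\phi_\varepsilon\|_{L^\infty}/\d(f^n)$, a constant that blows up as $\varepsilon\to 0$ precisely because $\phi$ may be unbounded below; and Chern--Levine--Nirenberg estimates control masses of wedge products against locally bounded potentials, not the $L^1$-norm of a composed quasi-psh function, so invoking them does not produce the claimed $\varepsilon$-uniformity. What is actually needed is a genuinely different mechanism that tracks the normalizing constants in the recursion for the potentials $u_n$ (via $u_{n+1}=u_n\circ f+\d(f^n)u_1-c_{n+1}$) and shows by induction that they accumulate at the rate $\sum_{j\le n}\d(f^j)$; this is what \cite{gu} does and what your proposal is missing.
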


Now  we arrive at

\smallskip

\noindent{\bf Proof of Part (i) of Main Theorem.}
For all $n>n_0$ consider the functions defined on $\P^k$
\begin{equation*}
h_n:=\Theta+\frac{d-\lambda}{h}\cdot\sum\limits_{j=1}^{n_0} \frac{1}{\lambda^{n-j+1}}\log{\left\vert H\big(\frac{ F_{n-j}}{\Vert F_{n-j}\Vert}\big)\right \vert}-\frac{1}{\lambda^n}\Theta \big(\frac{ F_{n}}{\Vert F_{n}\Vert}\big).
\end{equation*}
By Lemma  \ref{prop_recurrence_formula_for_Theta}, we  have that
\begin{equation}\label{eq1_application_lem_elementary}
\frac{\log{\Vert F_n\Vert}}{\lambda^n}-\frac{d-\lambda}{\lambda}\cdot\sum\limits_{j=1}^{n_0} \frac{\log{\Vert F_{n-j}\Vert}}{\lambda^{n-j}} =h_n .
\end{equation}
On the one hand, since $\lambda$ is  a  simple  root of $P$ we know  from Part 1) of  Lemma  \ref{lem_degree} that  $\d(f^n)\approx \lambda^n.$
On the other hand, since  $\Theta$ and  $\log{\vert H\vert}$  are plurisubharmonic in $\C^{k+1},$ an application of Lemma \ref{lem_Guedj}
and the second estimate of Part 2) of  Lemma \ref{lem_degree}  gives that  $\Vert h_n\Vert_{L(\P^k,\omega^k)}<C$ for a finite constant $C$ independent of $n.$
Moreover,  the polynomial  $t^{n_0} -\frac{d-\lambda}{\lambda}\cdot\sum\limits_{j=1}^{n_0} t^{n_0-j}$ is  equal to
$\frac{1}{\lambda^{n_0}} \frac{P(\lambda t)}{\lambda t-\lambda}$ by  using the identity
$P(\lambda t)=P(\lambda t)-P(\lambda).$ Therefore, by (\ref{eq2_section_preparatory_results}) 
and  by the hypothesis that $\lambda$ is a  simple root of $P,$  we see that  all roots  of    $t^{n_0} -\frac{d-\lambda}{\lambda}\cdot\sum\limits_{j=1}^{n_0} t^{n_0-j}$
are of modulus  strictly smaller than $1.$
Hence, we are in the position to apply  Lemma \ref{lem_elementary} to  the relations (\ref{eq1_application_lem_elementary}) with $\alpha_j:=-\frac{d-\lambda}{\lambda}$ and   $\alpha_{jn}:=-\frac{d-\lambda}{\lambda},$  $1\leq  j\leq n_0.$
Consequently, it follows  that  $  \frac{\log{\Vert F_n\Vert}}{\lambda^n}$ is locally  uniformly bounded in $L^1(\C^{k+1})$-norm.
This proves Part (i).

\smallskip

\noindent{\bf Proof of Part (ii).} Using identity (\ref{eq1_prop_recurrence}) we have
\begin{eqnarray*}
\frac{\log{\Vert F_n\circ F\Vert}}{\d(f^n)}&=&\frac{\log{\Vert F_{n+1}\Vert}}{\d(f^n)}+
\frac{\d(f^{n-n_0})\log{\vert H\vert}}{\d(f^n)}\\
&=&\lambda\cdot\frac{\log{\Vert F_{n+1}\Vert}}{\d(f^{n+1})}+\Big( \frac{\d(f^{n+1})}{\d(f^n)}-\lambda \Big)
\frac{\log{\Vert F_{n+1}\Vert}}{\d(f^{n+1})}+ \frac{\d(f^{n-n_0})\log{\vert H\vert}}{\d(f^n)}.
\end{eqnarray*}
 Now take the $(\limsup\limits_{n\to\infty})^{\ast}$ of both sides of the above identity.
  By Part (i), the left hand side  is then $u\circ F$ and the first term of the right hand side is $\lambda\cdot u.$
  The second term of the right hand side   is  $0$  by using the first estimate of Part 2) of Lemma \ref{lem_degree}
  and  the fact already proved in Part (i)  that
  $  \frac{\log{\Vert F_n\Vert}}{\d(f^n)}$ is locally  uniformly bounded in $L^1(\C^{k+1})$-norm.
  The last term of the right hand side  converges to $\frac{1}{\lambda^{n_0}}\cdot\log{\vert H\vert}$
  using Part 1) of Lemma \ref{lem_degree}: $\d(f^n)\approx \lambda^n.$
Summarizing, we have shown that
\begin{equation*}
u\circ F=\lambda\cdot u+\frac{1}{\lambda^{n_0}}\cdot\log{\vert H\vert}=\lambda\cdot u+\frac{d-\lambda}{h}\cdot\log{\vert H\vert},
\end{equation*}
where the last equality follows from equation (\ref{eq2_section_preparatory_results}).
This proves (ii).

\smallskip

\noindent{\bf Proof of Part (iii).}
Let $p\in U,$ where $U$ is  an open set contained in the Fatou set. Shrinking $U$ if  necessary, we may
assume  that  a  subsequence ${f^{n_j}}$ converges  in $U$ to  a holomorphic map $h$ and that
$f^{n_j}(U)\subset \{z_0=1, \vert z_j\vert <2\}.$  We can then  write
\begin{equation*}
\frac{\log\Vert F_{n_j}\Vert}{ \d(f^{n_j})}=
\frac{\log\Vert (F_{n_j})_0\Vert}{ \d(f^{n_j})}+\frac{1}{ \d(f^{n_j})}\log \Vert(1, A^1_j,\ldots,A^k_j  \Vert.
\end{equation*}
The last term converges  uniformly to $0,$  and the first term is pluriharmonic. Hence, using Part (i)
the function $u$ is pluriharmonic on $U,$ and $U$ does not intersect the support of $T.$

%%%%%%%%%%%%%%%%%%%%%%%%%%%%%%%%%%%%%%%%%%%%%%%%%%%%%%%%%%%%%%%%%%%%%%%%%%%%%%%%%%%%%%%%%%%%%%%%%%%%%%%%%%%%%%%%%%%%%%%%
%%%%%%%%%%%%%%%%%%%%%%%%%%%%%%%%%%%%%%%%%%%%%%%%%%%%%%%%%%%%%%%%%%%%%%%%%%%%%%%%%%%%%%%%%%%%%%%%%%%%%%%%%%%%%%%%%%%%%%%%

%
%
%
%
%

%
%
%
%
%
 \section{Examples} \label{section_Example}
First we recall the result  from our previous  work  \cite{nv}.
\subsection{A sufficient condition  for QAS self-maps}
In \cite{fs2} Forn{\ae}ss and Sibony give the following definition.
\begin{defi}\label{degreelowering} {\rm A  hypersurface $\mathcal{H}\subset \P^k$
is said to be a {\it degree lowering  hypersurface} of $f$ if, for some (smallest) $n\geq
1,$   $f^n(\mathcal{H})\subset \I(f).$ The integer $n$ is then called the {\it  height} of $\mathcal{H}.$}
\end{defi}

The following (see Proposition 3.2 in \cite{nv}) gives us the structure of a non AS self-map.
 \begin{prop}\label{structure}
    Let   $f $ be a   meromorphic self-map of $\P^k.$   Then there are exactly an
    integer $M\geq 0,$  $M$ degree lowering  hypersurfaces $\mathcal{H}_j$   with height
    $n_j,$ $j=1,\ldots,M,$   satisfying the following properties:
     \\
    (i)  All the numbers $n_j$ $,j=1,\ldots,M,$ are  distinct.\\
    (ii)  $\codim\left(f^m(\mathcal{H}_j)\right) >1$ for
    $ m=1,\ldots,n_j,$ and $j=1,\ldots,M.$\\
    (iii) For any   degree lowering  irreducible hypersurface $\mathcal{H}$ of $f,$ there are
     integers $n\geq 0$ and  $1\leq j\leq M$ such that $f^n(\mathcal{H})$ is a  hypersurface and
    $f^n(\mathcal{H})\subset  \mathcal{H}_j$
 \end{prop}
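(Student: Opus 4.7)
The plan is to show that only finitely many irreducible hypersurfaces are collapsed by $f$ in one step, that every degree-lowering irreducible hypersurface eventually maps onto one of them, and that the $\mathcal{H}_j$ can be obtained by grouping these by height. I expect the finiteness result to be the main obstacle; the rest is essentially bookkeeping.

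First (finiteness), I would show that the set $\mathcal{V}$ of irreducible hypersurfaces $V\subset\P^k$ with $\codim f(V)\geq 2$ is finite. Passing to a lifting $F$ of $f$, any such $V$ has its cone $\pi^{-1}(V)\subset\C^{k+1}$ contained in the Jacobian locus $\{\det dF=0\}$, because the restriction $F|_{\pi^{-1}(V)}$ must drop rank. Since $f$ is dominant, $\det dF\not\equiv 0$, so the Jacobian locus is a proper algebraic hypersurface in $\C^{k+1}$ with finitely many irreducible components, each homogeneous and thus descending to an irreducible hypersurface in $\P^k$. Hence only finitely many irreducible hypersurfaces in $\P^k$ can be collapsed; among these, denote by $V_1,\ldots,V_N$ those that are additionally degree-lowering.

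Second (canonical iterate), for any irreducible degree-lowering $\mathcal{H}$ of height $n$, the iterates $f^j(\mathcal{H})$ for $0\leq j\leq n$ are well-defined subvarieties (since $f^j(\mathcal{H})\not\subset\I(f)$ for $j<n$) whose dimensions are non-increasing. Because $\mathcal{H}$ has codimension $1$ while $f^n(\mathcal{H})\subset\I(f)$ has codimension $\geq 2$, there exists a unique $0\leq m<n$ such that $f^m(\mathcal{H})$ is a hypersurface and $f^{m+1}(\mathcal{H})$ has codimension $\geq 2$. This $f^m(\mathcal{H})$ is irreducible, collapsed by $f$, and degree-lowering of height $n-m$, so $f^m(\mathcal{H})=V_i$ for some $i$.

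Third (grouping and verification), let $n_1,\ldots,n_M$ be the distinct heights among $V_1,\ldots,V_N$ and set $\mathcal{H}_j:=\bigcup\{V_i:\operatorname{height}(V_i)=n_j\}$. Property (i) is then automatic. For (ii), each component $V_i$ of $\mathcal{H}_j$ satisfies $\codim f^m(V_i)\geq 2$ for $1\leq m\leq n_j$ by applying the canonical-iterate analysis to $V_i$ itself (where the integer ``$m$'' equals $0$); a finite union of codim-$\geq 2$ subvarieties remains codim $\geq 2$. For (iii), the canonical iterate of any degree-lowering irreducible hypersurface $\mathcal{H}$ equals some $V_i$, hence lies in $\mathcal{H}_j$ for the $j$ determined by $\operatorname{height}(V_i)$. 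The only delicate point, beyond the finiteness of Step 1, is ensuring that the height of the reducible $\mathcal{H}_j$ is exactly $n_j$, which follows because all its irreducible components have height $n_j$ and the image under $f^{n_j}$ of such a union lands in $\I(f)$ but the image under any smaller iterate does not.
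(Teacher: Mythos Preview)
The paper does not prove this proposition; it is merely quoted from the author's earlier article \cite{nv} (``see Proposition 3.2 in \cite{nv}''), so there is no in-paper argument to compare against.

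Your outline is correct and is essentially the standard argument. The only substantive step is the finiteness of irreducible hypersurfaces collapsed in one step, and your use of the Jacobian hypersurface of a lifting $F$ is the right tool: if $\det dF(p)\neq 0$ at a smooth point $p$ of the cone $\widehat V$, then $F$ is a local biholomorphism there and $F(\widehat V)$ has dimension $k$ near $F(p)$; but away from $\pi^{-1}(\I(f))$ (which meets $\widehat V$ in dimension $\leq k-1$) the image lies in the cone over $f(V)$, of dimension $\leq k-1$, a contradiction. Two small points are worth making explicit when you write this up. First, in Step~2 you should note that $f^m(\mathcal H)$ is \emph{irreducible} (closure of the image of an irreducible variety under a morphism), so it really coincides with one of the $V_i$ rather than merely being contained in some union of them. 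Second, the word ``exactly'' in the statement hints at uniqueness of the collection $\{\mathcal H_j\}$; this follows from your construction, since properties (ii)--(iii) force each $\mathcal H_j$ to be the union of all collapsed, degree-lowering irreducible hypersurfaces of a fixed height, but it is worth one sentence.
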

 So    $f$ is AS if and only if  $M=0.$

\begin{defi} \label{primitive} {\rm Under the hypothesis and  the notation of  Proposition \ref{structure}, for every $ j=1,\ldots,M,$
 $\mathcal{H}_j$ is said to be  {\it the primitive   degree lowering  hypersurface} of
 $f$  with the {\it height} $n_j.$} % and  is denoted by $\mathcal{C}_j.$
\end{defi}

We are now able to  state  a  sufficient criterion for QAS maps  (see Main Theorem in \cite{nv}).
\begin{thm} \label{thm_in_nv}
 A meromorphic self-map $f $ of $\P^k$ is QAS
if it satisfies the following properties  (i)--(iii):
 \begin{itemize}
\item[$(i)$]   There is only one primitive degree lowering
hypersurface, in other words, $M=1.$
\item[$(ii)$]   Let  $\mathcal{H}_0$ be the hypersurface from Part (i) and let  $n_0$ be its height. Then for every   irreducible component $\mathcal{H}$
of $\mathcal{H}_0$ and every $m=1,\ldots,n_0,$  $f^{m}(\mathcal{H})\not\subset\mathcal{H}_0;$
  \item[$(iii)$] For every   irreducible component $\mathcal{H}$
of $\mathcal{H}_0,$   one of the following two conditions holds\\
    $(iii)_1$  $f^{m}(\mathcal{H})\not\subset \I(f)$ for all $m\geq n_0+1,$\\
     $(iii)_2$    there is an $m_0\geq n_0$ such that
 $f^{m_0+1}(\mathcal{H})$ is a hypersurface and $f^{m}(\mathcal{H})\not\subset \I(f)$ for all $m$ verifying $n_0+1\leq m\leq m_0.$
\end{itemize}
\end{thm}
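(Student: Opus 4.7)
The plan is to construct, by induction on $n$, the sequence of liftings $(F_n)$ demanded by Definition 2, taking $H$ to be a reduced homogeneous polynomial of degree $h$ defining the unique primitive degree-lowering hypersurface $\mathcal{H}_0$. The underlying algebraic mechanism is this: the assumption $f^{n_0}(\mathcal{H}_0)\subset\I(f)$ forces the naive composition $F_1\circ F_{n_0}$ to vanish identically on the cone $\pi^{-1}(\mathcal{H}_0)=\{H=0\}\subset\C^{k+1}$, and since $H$ is reduced, the Nullstellensatz then implies that $H$ divides each component of $F_1\circ F_{n_0}$. This is the first degree drop encoded by the recurrence; in general, the common factor of $F_1\circ F_n$ that one must divide out will be $H\circ F_{n-n_0}$.

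For the initial range $1\le n\le n_0$, I would set $F_n:=F_1\circ F_{n-1}$ and check that the components of each $F_n$ share no nontrivial common factor, so that $\deg(F_n)=d^n=\d(f^n)$. This is the usual argument that $f$ is AS through iterate $n_0$: the uniqueness $M=1$ (condition (i)), together with condition (ii) which forbids $f^m(\mathcal{H})\subset\mathcal{H}_0$ for each irreducible component $\mathcal{H}$ of $\mathcal{H}_0$ and each $1\le m\le n_0$, prevents any premature degree drop from occurring in this range.

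The inductive step is the core. Assuming $F_0,\ldots,F_n$ have been constructed with $\deg(F_j)=\d(f^j)$, I would show (a) that $H\circ F_{n-n_0}$ divides each component of $F_1\circ F_n$, and (b) that the quotient $F_{n+1}:=(F_1\circ F_n)/(H\circ F_{n-n_0})$ has components with no remaining common factor. For (a), the key observation is that on the variety $\{z:F_{n-n_0}(z)\in\pi^{-1}(\mathcal{H}_0)\}$, the previously constructed $F_n$ agrees at generic points with a nonzero scalar multiple of $F_{n_0}\circ F_{n-n_0}$, so the degree-lowering property propagates and $F_1\circ F_n$ vanishes on this locus; reducedness of $H$ and the Nullstellensatz then yield divisibility by $H\circ F_{n-n_0}$. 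For (b), I would invoke condition (iii) to exclude any additional common factor: under $(iii)_1$, the orbit of each component $\mathcal{H}$ never re-enters $\I(f)$ after time $n_0$, so no new vanishing can be created; under $(iii)_2$, the orbit does return, but via the hypersurface $f^{m_0+1}(\mathcal{H})$, and one must argue that this re-entry fits into the existing pattern and does not give rise to a second primitive degree-lowering hypersurface, which would contradict $M=1$.

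The main obstacle will be the multiplicity analysis that glues (a) and (b): one must simultaneously verify that $H\circ F_{n-n_0}$ divides exactly once, neither to a higher power nor with any extra spurious factor. This reduces to a transversality calculation at a generic point of $\mathcal{H}_0$, where one shows that $f^{n_0}$ meets $\I(f)$ transversely; conditions (ii) and (iii) are precisely what ensure that this transversality is preserved as one iterates. Once the multiplicity is controlled, the degree recursion $\d(f^{n+1})=d\,\d(f^n)-h\,\d(f^{n-n_0})$ forces the quotient $F_{n+1}$ to have the correct degree $\d(f^{n+1})$, and the induction closes, producing the liftings required by Definition 2.
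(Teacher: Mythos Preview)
This theorem is not proved in the present paper; it is quoted from the author's earlier work \cite{nv} (the sentence introducing it reads ``see Main Theorem in \cite{nv}''). There is therefore no proof here against which to compare your proposal.

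Taken on its own, your strategy---induction on $n$, with the Nullstellensatz supplying the divisibility at each step---is the natural one and is essentially what one expects the argument in \cite{nv} to look like. There is, however, a real gap in your choice of $H$. You fix $H$ to be the \emph{reduced} equation of $\mathcal{H}_0$ and then assert that a ``transversality calculation at a generic point of $\mathcal{H}_0$'' will show that $H\circ F_{n-n_0}$ divides $F_1\circ F_n$ to order exactly one. But hypotheses (i)--(iii) are purely set-theoretic statements about orbits of hypersurfaces; they impose no constraint on the order to which $F_1\circ F_{n_0}$ vanishes along the components of $\mathcal{H}_0$, and no transversality of $f^{n_0}$ with $\I(f)$ can be extracted from them. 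The correct move is to \emph{define} $H$ as the greatest common divisor of the components of $F_1\circ F_{n_0}$, multiplicities included; conditions (i)--(ii) then guarantee only that the zero locus of $H$ coincides set-theoretically with $\mathcal{H}_0$. The inductive claim becomes $\gcd(F_1\circ F_n)=H\circ F_{n-n_0}$ for all $n\ge n_0$, and this is where (iii) is genuinely used: together with $M=1$ and Proposition~\ref{structure}(iii), it rules out any \emph{additional} common factor beyond the one already accounted for by $H$. With $H$ chosen this way the multiplicity obstacle you flag simply disappears, and the degree recursion closes automatically.
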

Theorem \ref{thm_in_nv} coupled  with Proposition \ref{structure} gives us an efficient and simple method  to check if a map  is QAS.  The remaining  of this  section is devoted  to
the study of
  new parameterized families
of QAS maps in $\P^2.$
\subsection{New family of QAS self-maps of $\P^2$}
Let  $P$  be  a (not  necessarily irreducible)  homogeneous  polynomial  in $\C^3.$
Let $Q_1,Q_2$ and $ Q_3$    be  (not  necessarily irreducible)  homogeneous  polynomials  in $\C^3$ of the same degree.
Let  $R$  be   a (not  necessarily irreducible)  homogeneous  polynomial  in  $\C^3$  such that
$\deg(R)=\deg(P)+\deg(Q_1)$  and  that
\begin{equation}\label{eq1_example}
P(1,1,1)Q_j(1,1,1)=R(1,1,1)\not=0,\qquad   j=1,2,3.
\end{equation}
Suppose for the moment that $PQ_1-R,\ PQ_2-R,\ PQ_3-R$ have  no nontrivial common factor,
we are able to define  a dominant meromorphic  map  of $\P^2$
\begin{equation}\label{eq2_example}
f([z:w:t]):=\left\lbrack PQ_1-R:PQ_2-R:PQ_3-R             \right\rbrack.
\end{equation}
It can be checked  that for every  $(a,b,c)\in\C^3\setminus\{0\}$ with $a+b+c=0,$
the hypersurface $\{aQ_1+bQ_2+cQ_3=0\}$ is  sent  by $f$ into  the complex line $\{az+bw+ct=0\}.$
\begin{prop} \label{prop_new_family}
 Suppose  that for all  $(a,b,c)\in\C^3\setminus\{0\}$ with $a+b+c=0,$
every  irreducible component of the hypersurface $\{aQ_1+bQ_2+cQ_3\}$ is  sent  by $f$ onto a hypersurface.
Suppose  in addition that every  irreducible component of the hypersurface $\{P=0\}$ is  sent  by $f^2$ onto a hypersurface  and that  if $\mathcal{G}$  is  an irreducible hypersurface such that $f(\mathcal{G})=[1:1:1]$
then $\mathcal{G}\subset \{P=0\}.$
 Then $f$  satisfies the properties (i)--(iii) listed in Theorem \ref{thm_in_nv},
in particular,  $f$  is  QAS.
\end{prop}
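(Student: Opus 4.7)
The plan is to verify conditions (i)--(iii) of Theorem~\ref{thm_in_nv} with $\mathcal{H}_0=\{P=0\}$ and $n_0=1$. Two initial observations guide everything. First, the normalization \eqref{eq1_example} gives $F_j(1,1,1)=P(1,1,1)Q_j(1,1,1)-R(1,1,1)=0$ for $j=1,2,3$, so $[1:1:1]\in\I(f)$. Second, on $\{P=0\}$ each $F_j=PQ_j-R$ reduces to $-R$, so $f$ sends $\{P=0\}$ to the single point $[1:1:1]$; hence $\{P=0\}$ is degree lowering of height exactly $1$, and $[1:1:1]\notin\{P=0\}$ because $R(1,1,1)\neq 0$ forces $P(1,1,1)\neq 0$.

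The crux is to show that $\mathcal{H}_0$ is the only primitive degree-lowering hypersurface, so that $M=1$. Given any primitive $\mathcal{H}'$ produced by Proposition~\ref{structure}, irreducibility of $\mathcal{H}'$ together with $\codim f(\mathcal{H}')>1$ in $\P^2$ forces $f(\mathcal{H}')$ to be a single point $p=[a:b:c]$. The relation $(F_1,F_2,F_3)\propto(a,b,c)$ on $\mathcal{H}'$ then implies $\alpha F_1+\beta F_2+\gamma F_3\equiv 0$ on $\mathcal{H}'$ for every $(\alpha,\beta,\gamma)$ in the $2$-dimensional subspace $V:=\{a\alpha+b\beta+c\gamma=0\}$ of $\C^3$. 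Because $V$ and the $2$-dimensional hyperplane $H:=\{\alpha+\beta+\gamma=0\}$ of $\C^3$ must meet in a subspace of dimension at least $1$, one may fix a nonzero $(\alpha_0,\beta_0,\gamma_0)\in V\cap H$. For such a choice the polynomial identity $\alpha_0 F_1+\beta_0 F_2+\gamma_0 F_3=P(\alpha_0 Q_1+\beta_0 Q_2+\gamma_0 Q_3)$ holds throughout $\C^3$, and its vanishing on the irreducible $\mathcal{H}'$ leaves two alternatives: either $\mathcal{H}'\subset\{P=0\}$, in which case $\mathcal{H}'$ is a component of $\{P=0\}$ and $p=[1:1:1]$; or $\mathcal{H}'$ is an irreducible component of $\{\alpha_0 Q_1+\beta_0 Q_2+\gamma_0 Q_3=0\}$, contradicting the first hypothesis of the proposition because $f(\mathcal{H}')=p$ is a point rather than a hypersurface. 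Hence every primitive degree-lowering hypersurface is a component of $\{P=0\}$ of height~$1$; Proposition~\ref{structure} then amalgamates them into the single $\mathcal{H}_0=\{P=0\}$ with $n_0=1$, which proves~(i). The third hypothesis of the proposition supplies an independent shortcut to $\mathcal{H}'\subset\{P=0\}$ once $p=[1:1:1]$ is identified.

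The remaining conditions follow at once. For~(ii), since $n_0=1$ only $m=1$ needs checking, and $f(\mathcal{H})=[1:1:1]\notin\{P=0\}$ as already noted. For~(iii) I adopt alternative (iii)$_2$ with $m_0:=n_0=1$: the index range $n_0+1\le m\le m_0$ is empty, so the non-intersection with $\I(f)$ is vacuously true, while the requirement that $f^{m_0+1}(\mathcal{H})=f^2(\mathcal{H})$ be a hypersurface is exactly the second hypothesis. This finishes the verification of Theorem~\ref{thm_in_nv} and shows that $f$ is QAS. The main obstacle is the dimensional step above: it hinges on the fact that two $2$-dimensional subspaces of $\C^3$ always share a nonzero vector, combined with the factorization $\alpha F_1+\beta F_2+\gamma F_3=P(\alpha Q_1+\beta Q_2+\gamma Q_3)$ which is valid precisely when $\alpha+\beta+\gamma=0$, in order to force every collapsing hypersurface either into $\{P=0\}$ or into a pencil hypersurface excluded by the first hypothesis.
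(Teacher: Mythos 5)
Your proof is correct and follows essentially the same route as the paper's: both arguments reduce to finding a nonzero linear combination $\alpha_0 F_1+\beta_0 F_2+\gamma_0 F_3$ that vanishes on a collapsing irreducible hypersurface and whose coefficients sum to zero, so that the $R$-terms cancel and the expression factors as $P\cdot(\alpha_0 Q_1+\beta_0 Q_2+\gamma_0 Q_3)$; one then invokes irreducibility and the two hypotheses on $\{P=0\}$ and on the pencil $\{aQ_1+bQ_2+cQ_3\}$ to force the component into $\{P=0\}$. The only difference is cosmetic: you obtain the coefficient vector by observing that the two $2$-planes $V=\{a\alpha+b\beta+c\gamma=0\}$ and $H=\{\alpha+\beta+\gamma=0\}$ in $\C^3$ must intersect nontrivially, while the paper writes the explicit vector $(ac-ab,\,a^2-ac,\,ab-a^2)$ (after normalizing $a\neq 0$), which is precisely a generator of $V\cap H$. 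One small imprecision to watch: the primitive hypersurfaces $\mathcal{H}_j$ of Proposition~\ref{structure} need not be irreducible, so your sentence ``irreducibility of $\mathcal{H}'$ forces $f(\mathcal{H}')$ to be a point'' should be run on each irreducible component of $\mathcal{H}'$ (as the paper does with an irreducible $\mathcal{G}$) rather than on $\mathcal{H}'$ itself; the rest of the argument and the verification of (ii) and (iii)$_2$ with $m_0=n_0=1$ are exactly as in the paper.
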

\begin{proof}
First observe   by  (\ref{eq2_example})  and (\ref{eq1_example}) that  the hypersurface $\{ P=0\}$  is  sent  by $f$  to  the point $[1:1:1]\in \I(f).$
We  will  show that  there  is  no irreducible  degree  lowering hypersurface  other  than  the components of $  \{ P=0\}.$
To do this  suppose, in order  to get a  contradiction,  that
 $G$ is an irreducible homogeneous polynomial in $\C^3$ such that $\mathcal{G}\not\subset\{P=0\}$ and  that
  $f(\mathcal{G})$ is a point $[a:b:c]\in\P^2,$ where $\mathcal{G}$ is the hypersurface $\{G =0\}$ in $\P^2.$
  Suppose, without loss of generality, that  $a\not=0.$
  We deduce from (\ref{eq2_example}) and the equality  $f(\mathcal{G})=[a:b:c]$  that
  $G$ divides both polynomials
 $  P(bQ_1-aQ_2)-(b-a)R$ and
  $  P(cQ_1-aQ_3)-(c-a)R.$ Hence,
  $G$ divides  the polynomial
  \begin{equation*}
   P\cdot \Big( (c-a)(bQ_1-aQ_2)-(b-a)(cQ_1-aQ_3)\Big).
  \end{equation*}
  Since   $\mathcal{G}\not\subset\{P=0\}$ and  $G$ is  irreducible,  we  see that $G$  divides
  the polynomial  $(ac-ab)Q_1+(a^2-ac)Q_2+(ab-a^2)Q_3.$  Since $a\not=0,$    we deduce  from
  the first hypothesis  that either  $f(\mathcal{G})$ is  a hypersurface or $a=b=c.$ The former case  contradicts   the assumption that  $f(\mathcal{G})$ is a point $[a:b:c]\in\P^2.$ The latter case implies that
  $f(\mathcal{G})=[1:1:1],$ which, by the third hypothesis, gives that  $\mathcal{G}\subset\{P=0\},$  which
  contradicts our assumption.

 We have  shown that  $  \{ P=0\}$ is  the unique primitive  degree lowering hypersurface and its height is $1.$
Since by (\ref{eq1_example}) $\left\lbrack
1:1:1\right\rbrack\not\in  \{ P=0\},$  and  every  irreducible component of the hypersurface $\{P=0\}$ is  sent  by $f^2$ onto a hypersurface, it follows that     $f$   satisfies  (i)-(ii)-$\text{(iii)}_2$ of Theorem  \ref{thm_in_nv}.
\end{proof}

Now we will discuss  cases when  the  hypotheses  of  Proposition \ref{prop_new_family} are fulfilled.

\begin{cor}\label{cor_criterion1}
Suppose that $Q_2-Q_1,$   $Q_3-Q_1$ are coprime and that $P,$  $R$ are coprime. Then
$PQ_1-R,$ $PQ_2-R,$ $PQ_3-R$ have  no nontrivial common factor. Moreover,
for every
     irreducible hypersurface  $\mathcal{G}$ with $f(\mathcal{G})=[1:1:1],$
we have  $\mathcal{G}\subset \{P=0\}.$ Here  $f$ is  defined  by (\ref{eq2_example}).
\end{cor}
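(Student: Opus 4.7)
\medskip

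\noindent\textbf{Proof proposal for Corollary \ref{cor_criterion1}.}
The plan is to handle the two claims in turn, in both cases reducing the analysis, via an irreducible divisor $G$ of some auxiliary polynomial, to the two hypotheses of coprimality. The key algebraic identity that drives everything is that one can form linear combinations of the three polynomials $PQ_j-R$ whose $R$-terms cancel, leaving multiples of $P(Q_i-Q_j)$.

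For the first assertion, I would assume $G$ is an irreducible common factor of $PQ_1-R$, $PQ_2-R$, $PQ_3-R$ and examine the two differences
\begin{equation*}
(PQ_1-R)-(PQ_2-R)=P(Q_1-Q_2),\qquad (PQ_1-R)-(PQ_3-R)=P(Q_1-Q_3).
\end{equation*}
Since $G$ is irreducible, each relation forces $G\mid P$ or $G\mid (Q_1-Q_j)$ for $j=2,3$. If $G\mid P$ then $G\mid P Q_1-(PQ_1-R)=R$, contradicting the hypothesis that $P$ and $R$ are coprime. Otherwise $G$ divides both $Q_2-Q_1$ and $Q_3-Q_1$, contradicting the other coprimality hypothesis.

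For the second assertion, let $\mathcal{G}=\{G=0\}$ be an irreducible hypersurface with $f(\mathcal{G})=[1:1:1]$. Reading off from the definition \eqref{eq2_example} of $f$, the condition $f(\mathcal{G})=[1:1:1]$ means that $G$ divides the two differences
\begin{equation*}
(PQ_1-R)-(PQ_2-R)=P(Q_1-Q_2),\qquad (PQ_1-R)-(PQ_3-R)=P(Q_1-Q_3).
\end{equation*}
By irreducibility of $G$, either $G\mid P$, which is exactly the desired conclusion $\mathcal{G}\subset\{P=0\}$, or else $G$ divides both $Q_1-Q_2$ and $Q_1-Q_3$, which again contradicts their coprimality. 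This completes the proposed plan.

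I do not expect a serious obstacle: the whole argument is a short exercise in unique factorization in the polynomial ring $\C[z_0,z_1,z_2]$, and the only subtle point is to set up the correct linear combinations so that the $R$-terms drop out, which is precisely what makes the two coprimality hypotheses in the statement the right ones.
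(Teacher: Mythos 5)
The paper does not actually supply a proof of Corollary \ref{cor_criterion1} --- it is explicitly ``left to the interested reader as an exercise'' --- so there is nothing to compare against; your argument is the natural one and it is correct. Both halves reduce, as you say, to forming the differences $(PQ_1-R)-(PQ_j-R)=P(Q_1-Q_j)$ and invoking irreducibility of the putative common factor $G$ together with the two coprimality hypotheses, with $G\mid P$ forcing $G\mid R$ via $R = PQ_1-(PQ_1-R)$. The only point worth making explicit in the second half is why $f(\mathcal{G})=[1:1:1]$ yields the divisibilities $G \mid P(Q_1-Q_j)$: since $\I(f)$ has codimension $\geq 2$, the set $\mathcal{G}\setminus\I(f)$ is Zariski dense in $\mathcal{G}$, and on it the three components $PQ_j-R$ take equal values, so each difference vanishes on all of $\mathcal{G}$ and hence is divisible by $G$.
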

\begin{proof}
It is  left to the interested reader as an exercise.
\end{proof}

\begin{cor}\label{cor_criterion2} Suppose that the pre-image of the point $[1:1:1]$ by the map  $\P^2\ni [z:w:t]\mapsto [Q_1:Q_2:Q_3]$ is
a set of finite points and that
  for  every  $[z:w:t]\in \{P=0\}\cap \{R=0\}$ and every  $(a,b,c)\in\C^3\setminus\{0\}$ with $a+b+c=0,$ we have $(aQ_1+bQ_2+cQ_3)(z,w,t)\not=0.$
  Suppose in addition that   for every  $(a,b,c)\in\C^3\setminus\{0\}$ with $a+b+c=0,$  two polynomials $P$ and $aQ_1+bQ_2+cQ_3$  are coprime.
Then every  irreducible component of the hypersurface $\{aQ_1+bQ_2+cQ_3\}$ is  sent  by $f$ onto a hypersurface.
\end{cor}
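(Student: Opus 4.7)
The plan is to argue by contradiction. Fix $(a,b,c)\in\C^3\setminus\{0\}$ with $a+b+c=0$ and let $\mathcal{H}$ be an irreducible component of $\{aQ_1+bQ_2+cQ_3=0\}$, with irreducible defining polynomial $G$. As noted just before Proposition \ref{prop_new_family}, $f(\mathcal{H})$ is contained in the line $L:=\{az+bw+ct=0\}\cong\P^1$. Since $\mathcal{H}$ is an irreducible curve, $\overline{f(\mathcal{H})}$ is either all of $L$ (a hypersurface, as desired) or a single point $[A:B:C]\in L$; the plan is to rule out the latter.

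If $G$ divides $P$, then $G$ is a common factor of $P$ and $aQ_1+bQ_2+cQ_3$, contradicting hypothesis (3); so assume $G\nmid P$. The identity $f\equiv[A:B:C]$ on $\mathcal{H}$, once the proportionality factor $\mu$ such that $(PQ_j-R)=\mu A_j$ is cleared, yields the polynomial relations
\begin{equation*}
P(A_iQ_j-A_jQ_i)\equiv(A_i-A_j)R\pmod{G},\qquad i,j\in\{1,2,3\},
\end{equation*}
where $(A_1,A_2,A_3):=(A,B,C)$.

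\emph{Subcase $[A:B:C]=[1:1:1]$.} Here $A_i=A_j$, so the relations reduce to $P(Q_j-Q_i)\in(G)$. Since $G$ is irreducible and $G\nmid P$, $G$ divides each $Q_i-Q_j$; consequently $Q_1,Q_2,Q_3$ agree on $\mathcal{H}$ and a Zariski-dense open subset of $\mathcal{H}$ is mapped to $[1:1:1]$ by $[Q_1:Q_2:Q_3]$. Hypothesis~(1) declares this preimage finite, contradicting $\dim\mathcal{H}=1$.

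\emph{Subcase $[A:B:C]\neq[1:1:1]$.} After permuting indices, one may assume $A\neq B$. By Bezout, $\mathcal{H}\cap\{P=0\}\neq\emptyset$; at any point $(z_0,w_0,t_0)$ of this intersection the relation $P(BQ_1-AQ_2)\equiv(B-A)R\pmod{G}$ gives $(B-A)R(z_0,w_0,t_0)=0$, whence $R(z_0,w_0,t_0)=0$. Thus $(z_0,w_0,t_0)$ lies in $\{P=0\}\cap\{R=0\}\cap\{aQ_1+bQ_2+cQ_3=0\}$, contradicting hypothesis~(2). The main subtlety is exactly this case split: the Bezout argument breaks precisely in the exceptional direction $[1:1:1]$ (where the coefficient $B-A$ vanishes), which is why hypothesis~(1) must separately exclude that direction.
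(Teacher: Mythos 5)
Your proof is correct and follows essentially the same approach as the paper's: both use Bezout together with hypothesis (2) to force the image point to be $[1:1:1]$ (you present this as ruling out the subcase $[A:B:C]\neq[1:1:1]$, whereas the paper derives $p=[1:1:1]$ directly), and then invoke hypotheses (1) and (3) to obtain the final contradiction via $G\mid (Q_i-Q_j)$ versus $G\mid P$. Your explicit polynomial congruences modulo $G$ and the two-subcase organization amount to a more detailed write-up of the paper's terser argument, not a different method.
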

\begin{proof}
 In order to get a contradiction, suppose  that $\mathcal{G}$ is  an irreducible component of the hypersurface $\{aQ_1+bQ_2+cQ_3\}$ and  $f(\mathcal{G})$  is a point $p\in\P^2,$ where $(a,b,c)\in\C^3\setminus\{0\}$ with $a+b+c=0.$ Using the explicit formula  (\ref{eq2_example}),
the second  hypothesis  ensures  that there  exists
$$[z_0:w_0:t_0]\in  \big (\{aQ_1+bQ_2+cQ_3\}\cap \{P=0\}\big )\setminus \I(f).$$
Consequently,  we get  $p=f([z_0:w_0:t_0])=[1:1:1].$
This  implies that  either the map $\P^2\ni [z:w:t]\mapsto [Q_1:Q_2:Q_3]$ sends $\mathcal{G}$ to the point $[1:1:1]$
or $\mathcal{G}\subset\{P=0\}.$
But the former case  contradicts  the first hypothesis  whereas  the latter case contradicts the third hypothesis.
\end{proof}

\begin{cor}\label{cor_criterion3} Suppose that   for every  $(a,b,c)\in\C^3\setminus\{0\},$  two polynomials $P$ and $aQ_1+bQ_2+cQ_3$  are coprime. Suppose in addition that  the $3\times 3$ matrix whose   $j$-th line is
\begin{equation*}
\Big (\frac{\partial(PQ_j-R)}{\partial z}(1,1,1)   \quad \frac{\partial(PQ_j-R)}{\partial w}(1,1,1)\quad
 \frac{\partial(PQ_j-R)}{\partial t}(1,1,1)\Big)
\end{equation*}
has  the rank $\geq 2.$
Then every  irreducible component of the hypersurface $\{P=0\}$ is  sent  by $f^2$ onto a hypersurface.
\end{cor}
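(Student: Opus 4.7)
The plan is to compute $f^2(\mathcal{H})$ directly for $\mathcal{H}$ an arbitrary irreducible component of $\{P=0\}$, by combining two local analyses at the point $[1:1:1]$. On the one hand, $G_j:=PQ_j-R\equiv -R$ along $\mathcal{H}$, so $f(\mathcal{H})=[1:1:1]$; on the other hand, (\ref{eq1_example}) gives $G_j(1,1,1)=0$, so $[1:1:1]\in\I(f)$ and $f^2$ is indeterminate along $\mathcal{H}$. A tangent-cone argument is thus required.

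First I would compute the direction of approach of $f$ to $[1:1:1]$ from a transverse arc to $\mathcal{H}$. Choose local coordinates $(u,v)$ around a generic smooth point $p_0\in\mathcal{H}$ with $\mathcal{H}=\{v=0\}$, and the affine chart $(x,y)\mapsto[1+x:1+y:1]$ near $[1:1:1]$. A Taylor expansion yields $G_j(u_0,v)=-R(u_0,0)+v\bigl((\partial_vP)Q_j-\partial_vR\bigr)\big|_{v=0}+O(v^2)$; the constant term cancels in the differences $G_j-G_3$, so the affine image of $v\mapsto f(u_0,v)$ tends to $(0,0)$ along the direction $[\mu_1(p_0):\mu_2(p_0)]:=[Q_3(p_0)-Q_1(p_0):Q_3(p_0)-Q_2(p_0)]$, independently of the chosen transverse arc. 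Expanding $f$ once more at $[1:1:1]$ (using $G_j(1,1,1)=0$), the leading term reads $[xA_j+yB_j]_{j=1,2,3}$ with $A_j:=\partial_zG_j(1,1,1)$, $B_j:=\partial_wG_j(1,1,1)$. Composing, for generic $p_0\in\mathcal{H}$ one finds $f^2(p_0)=\bar M([\mu_1(p_0):\mu_2(p_0)])$, where $\bar M:\P^1\to\P^2$ is the projective linear map induced by the $3\times 2$ matrix with rows $(A_j,B_j)$.

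Two observations then complete the proof. First, Euler's identity applied to each $G_j$ at $(1,1,1)$ yields $A_j+B_j+C_j=0$ with $C_j:=\partial_tG_j(1,1,1)$, so the rank of the $3\times 3$ matrix in the hypothesis coincides with the rank of $(A_j,B_j)_{j=1,2,3}$; the assumption of rank $\geq 2$ therefore forces it to be exactly $2$, making $\bar M$ a projective embedding with image a line $\ell\subset\P^2$, and in particular $f^2(\mathcal{H})\subset\ell$. Second, I claim $[\mu_1:\mu_2]$ is non-constant on $\mathcal{H}$: otherwise there would exist $(\alpha,\beta)\neq(0,0)$ with $\beta(Q_3-Q_1)-\alpha(Q_3-Q_2)\equiv 0$ on $\mathcal{H}$, i.e.\ $aQ_1+bQ_2+cQ_3\equiv 0$ on $\mathcal{H}$ with $(a,b,c)=(-\beta,\alpha,\beta-\alpha)\neq 0$. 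The irreducible factor $P_1$ of $P$ defining $\mathcal{H}$ would then divide both $P$ and $aQ_1+bQ_2+cQ_3$, contradicting the coprimality hypothesis. Hence $p_0\mapsto[\mu_1:\mu_2]$ is a non-constant rational map from $\mathcal{H}$ to $\P^1$, its image is all of $\P^1$, and $f^2(\mathcal{H})=\bar M(\P^1)=\ell$, a hypersurface of $\P^2$.

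The main obstacle is justifying rigorously that the naive tangent-direction computation identifies $f^2(\mathcal{H})$ as defined through the resolution of the graph of $f^2$. This is cleanly handled by blowing up the intermediate $\P^2$ (target of the first factor and source of the second factor of $f\circ f$) at $[1:1:1]$: the rank-$2$ hypothesis is precisely the statement that the strict transform of the second $f$ extends regularly across the exceptional divisor $E\cong\P^1$ and maps $E$ isomorphically onto $\ell$, while the first local computation shows that the lifted image of $\mathcal{H}$ in $E$ is the image of the non-constant map $p_0\mapsto[\mu_1(p_0):\mu_2(p_0)]$, which is all of $E$; therefore $f^2(\mathcal{H})=\bar M(E)=\ell$.
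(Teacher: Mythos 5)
Your proof is correct, and at its core it performs the same linear-algebraic computation as the paper: both identify the image of an irreducible component $\mathcal{H}\subset\{P=0\}$ under $f^2$ with the image of $\mathcal{H}$ under the composition of the direction map $[Q_1:Q_2:Q_3]$ (equivalently $[Q_3-Q_1:Q_3-Q_2]$) with the linear map given by the Jacobian of $(G_1,G_2,G_3)$ at $(1,1,1)$, and both then invoke coprimality and the rank hypothesis to rule out collapsing to a point. The difference is in how this reduction is established. The paper proceeds algebraically: it Taylor-expands $F\circ F$ around $(-R,-R,-R)$, observes each component is divisible by $P$, and reads off the restriction of $\tfrac{F\circ F}{P}$ to $\{S=0\}$ as $\partial_z G_j(1,1,1)\,Q_1+\partial_w G_j(1,1,1)\,Q_2+\partial_t G_j(1,1,1)\,Q_3$ modulo $\mathcal{O}(P)$, then dismisses the possibility of collapse with a terse appeal to the hypotheses. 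You instead blow up the intermediate $\P^2$ at $[1:1:1]$, compute the lift of $f|_{\mathcal{H}}$ to the exceptional divisor as the (nonconstant) map $p_0\mapsto[Q_3-Q_1:Q_3-Q_2]$, and compute the induced map on the exceptional $\P^1$ via the linearization of $f$ at $[1:1:1]$. The geometric framing buys you a cleaner justification that this naive tangent-cone calculation really computes the proper image through the graph of $f^2$, and you make explicit two points the paper leaves implicit: that Euler's relation forces the Jacobian rank to be exactly $2$ (so $\bar M$ embeds $\P^1$ as a line), and precisely how the coprimality hypothesis produces non-constancy of $[Q_3-Q_1:Q_3-Q_2]$ on $\mathcal{H}$. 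So: same computation, but your presentation is more geometric and more rigorous in the final steps.
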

\begin{proof}
Let  $F:\ \C^3\rightarrow\C^3$  be given by
$$F=(F_1,F_2,F_3):=(PQ_1-R,PQ_2-R,PQ_3-R).$$ Then  a straightforward computation  shows that the $j$-th component of  $\frac{F\circ F}{P}$  $(1\leq j\leq 3)$ has  the form
\begin{equation*}%\label{eq_cor_criterion2}
 \frac{\partial(PQ_j-R)}{\partial z}(F)\cdot Q_1+\frac{\partial(PQ_j-R)}{\partial w}(F)\cdot Q_2+
 \frac{\partial(PQ_j-R)}{\partial t}(F)\cdot Q_3+ \mathcal{O}(P),
\end{equation*}
where $\mathcal{O}(P)$ is a polynomial  which can be factored by $P.$ Observe that the  proof of the corollary
will be complete if we can show that for any fixed irreducible divisor $S$ of $P,$
the image of $[S=0]$ by  $\frac{F\circ F}{P}$  $(1\leq j\leq 3)$ is  a  curve.
Using the above  formula, this task is  reduced to show that the (not necessarily dominant) rational map of $\P^2$ whose
$j$-th component is
\begin{equation*}%\label{eq_cor_criterion2}
 \frac{\partial(PQ_j-R)}{\partial z}(1,1,1)\cdot Q_1+\frac{\partial(PQ_j-R)}{\partial w}(1,1,1)\cdot Q_2+
 \frac{\partial(PQ_j-R)}{\partial t}(1,1,1)\cdot Q_3,
\end{equation*}
does not map $[S=0]$ to a point. But this   is  always satisfied taking into account the hypothesis.
\end{proof}

Now  we fix  the  degrees of  $P$ and  $Q_1.$
Using the above corollaries, we see  easily that  with  a generic  choice of the coefficients of
$R,$  $P,$ $Q_1,$  $Q_2,$ $Q_3$
such that  relation   (\ref{eq1_example}) holds,    the hypotheses of Corollary \ref{cor_criterion1}, \ref{cor_criterion2} and \ref{cor_criterion3} are  fulfilled. We thus obtain a family of non AS but QAS self-maps.
The characteristic polynomial of  maps in this  family is (see (\ref{eq2_section_preparatory_results}))
\begin{equation*}
P(t):=t^2-(\deg(P)+\deg(Q_1))t+\deg(P).
\end{equation*}

\end{document}